\numberwithin{equation}{section}
\theoremstyle{plain}
\newtheorem{theorem}{Theorem}[section]
\newtheorem{proposition}[theorem]{Proposition}
\newtheorem{corollary}[theorem]{Corollary}
\theoremstyle{definition}
\newtheorem{definition}[theorem]{Definition}
\newtheorem{example}[theorem]{Example}
\newtheorem{remark}[theorem]{Remark}
\newcommand{\RR}{\mathbb{R}}
\newcommand{\dd}{\mathsf{d}}
\newcommand{\Ad}{\mathrm{Ad}}
\newcommand{\ad}{\mathrm{ad}}
\def\Set#1{\Setdef#1\Setdef}
\def\Setdef#1|#2\Setdef{\left\{#1\,\;\mathstrut\vrule\,\;#2\right\}}%
\begin{document}

\title{Stability analysis for the pseudo-Riemannian geodesic flows of step-two nilpotent Lie groups}
\author{Genki Ishikawa\footnotemark[1] \and Daisuke Tarama\footnotemark[1]}
\begingroup
\def\thefootnote{\fnsymbol{footnote}}
\footnotetext[1]{Department of Mathematical Sciences, Ritsumeikan University. 1-1-1 Nojihigashi, Kusatsu, Shiga, 525-8577, Japan.}
\endgroup
\date{\empty}
\maketitle

\noindent\textbf{Key words} Lie group, equilibrium, Lyapunov stability, Williamson type, Cartan subalgebra\\
\textbf{MSC(2020)} 22E25, 34D20, 53D25

\begin{abstract}
The present paper deals with the stability analysis for the geodesic flow of a step-two nilpotent Lie group equipped with a left-invariant pseudo-Riemannian metric.
The Lie-Poisson equation is formulated using the so-called $j$-mapping, a linear operator associated to the step-two nilpotent Lie algebras equipped with the induced scalar product. 
The stability of equilibrium points for the Hamilton equation is determined in terms of their Williamson types. 
\end{abstract}

\section{Introduction}

It is widely known that the geodesics of a Riemannian manifold play very important roles in investigating the geometry. 
In particular, Lie groups equipped with a left-invariant Riemannian metric form a reasonable class of Riemannian manifolds to be studied more concretely from geometrical points of view. 
It is also well known that the geodesics induce a Hamiltonian system on the (co)tangent bundle of the Riemannian manifold, known as the {\it geodesic flow}. 
The same framework for formulating the geodesic flows applies also to pseudo-Riemannian manifolds. 
It is, then, of much interest to study the geodesic flows of Lie groups with respect to left-invariant both Riemannian and pseudo-Riemannian metrics. 

On the other hand, the geodesic flow of the three-dimensional rotation group $SO(3)$ with respect to a left-invariant Riemannian metric can physically be interpreted as a free rigid body, i.e. the rotational motion of a rigid body under no external force. 
Motivated by the studies on infinite-dimensional integrable systems such as the Korteweg-de Vries equation, the free rigid body dynamics were extended to more general Lie groups from the viewpoint of Hamiltonian systems and the generalizations are in fact completely integrable geodesic flows of the Lie groups with respect to certain left-invariant metrics. 
In particular, Mishchenko and Fomenko introduced a class of left-invariant metrics on an arbitrary semi-simple Lie group which allow the completely integrable geodesic flow around 1980. 
See \cite{dikii_1972,manakov_1976,Mishchenko,Mishchenko-Fomenko,ratiu_1980}, as well as \cite{fomenko_1988,perelomov_1990} for more details. 

From the viewpoint of dynamical systems theory, it is an important issue to discuss the stability of equilibria for rigid body systems. 
See e.g. \cite[Appendices 2 \& 5]{arnold_1989} for more information about the topics. 
However, the stability of equilibria for the generalized free rigid body dynamics has been studied only since the middle of 2000's. 
Up to now, there have been many researches in this direction, but they mostly focus on the semi-simple Lie groups. 
See \cite{feher-marshall_2003,spiegler_2006,bolsinov_oshemkov_2009, birtea-casu-ratiu-turhan_2012,izosimov_2014,izosimov_2017, bolsinov_izosimov_2014, ratiu_tarama_2015, ratiu_tarama_2020}. 

An advantage to introduce the Hamiltonian formalism for the geodesics can be found in the description of the geodesic flow in terms of the {\it Lie-Poisson equation} on (the dual to) the Lie algebra, which is given due to the symmetry.
It generalizes the Euler equation for the free rigid body and can be formulated in view of Lie-Poisson Reduction Theorem. 
See \cite{marsden_ratiu_1999,ratiu_et_al_2005} for the details about Lie-Poisson Reduction. 

\medskip

The present paper deals with the stability of relative equilibria for the geodesic flow of step-two nilpotent Lie groups equipped with a left-invariant (pseudo-)Riemannian metric. 
Extensions of the researches on left-invariant geodesic flows in the direction to other types of Lie groups than semi-simple Lie groups have already been found in \cite{Mishchenko-Fomenko}, which mentions solvable groups. 
For the three-dimensional Lie groups, \cite{holm_marsden_1991} discusses such an extension at the level of classical free rigid body dynamics and it is also used in the analysis of quantum systems in \cite{iwai_tarama_2010}. 

Contrary to the case of semi-simple Lie groups, there are only less unified approaches to the nilpotent Lie groups and hence it seems rather difficult to study the geodesic flows on general nilpotent Lie groups. 
However, it is still possible to handle the dynamical properties of the geodesic flows of the step-two nilpotent Lie groups by a systematic method. 

The Lie-Poisson equation for the geodesic flow of a step-two nilpotent Lie group is described in terms of the so-called $j$-mapping. 
It plays the central role in proving the complete integrability for the geodesic flows for Heisenberg Lie groups and pseudo-$H$-type Lie groups, as studied in \cite{kocsard-ovando-reggiani_2016,bauer_tarama_2018}. 

As for the equilibrium on a general coadjoint orbit in the dual to the Lie algebra for the step-two nilpotent Lie group, on which the Lie-Poisson equation can naturally be restricted as a Hamiltonian system with respect to the orbit symplectic form, the stability analysis amounts to the eigenvalue problem for the $j$-mapping. 
For some typical examples of step-two nilpotent Lie groups, the eigenvalues can be classified and the stability of the equilibrium points is found in terms of the Williamson types. 

In the case where the left-invariant metric is Riemannian, the eigenvalues of the $j$-mapping are purely imaginary and all the equilibrium points are Lyapunov stable. 
However, when the metric is pseudo-Riemannian, there appear different types of eigenvalues for the $j$-mapping and the stability of the equilibria is more complicated. 
Nevertheless, as a main result of the present paper, it is shown that the problem can be solved on a technical condition by virtue of the classification of the Cartan subalgebras in simple Lie algebras of types $\mathsf{B}$ and $\mathsf{D}$, which was carried out by Kostant \cite{kostant_1955,kostant_2009} and Sugiura \cite{sugiura_1959} during 1950's. 

\medskip

The present paper is organized as follows:

\noindent
Section \ref{Left-invariant geodesic flows of a Lie group} gives a review on the Lie-Poisson formalism for the geodesic flow of a Lie group equipped with a left-invariant (pseudo-)Riemannian metric. 
The Lie-Poisson equation is more intensively explained for step-two nilpotent Lie groups in terms of the $j$-mapping. 
Moreover, several important classes of step-two nilpotent Lie groups are mentioned. 

In Section \ref{Equilibrium points and stability}, the stability of equilibrium points for the left-invariant pseudo-Riemannian geodesic flow of a step-two nilpotent Lie group is analysed. 
As preliminaries, the notion of Williamson type of an equilibrium point for a Hamiltonian system is reviewed and related to the Lyapunov stability. 
In the case of general step-two nilpotent Lie groups, the Williamson types of the equilibrium points are determined on the basis of the classification results on the conjugacy classes of Cartan subalgebras of real semi-simple Lie algebras of types $\mathsf{B}$ and $\mathsf{D}$. 
It is based on the method by \cite{sugiura_1959} as it gives more concrete matrix realizations of the conjugacy classes of Cartan subalgebras. 
According to the description of step-two nilpotent Lie groups given in Section \ref{Left-invariant geodesic flows of a Lie group}, the Williamson types of equilibria are more explicitly determined. 

Appendix summarizes the classification for the conjugacy classes of Cartan subalgebras of real semi-simple Lie algebras of types $\mathsf{B}$ and $\mathsf{D}$ along the line of \cite{sugiura_1959}.

\section{Left-invariant geodesic flows of a Lie group}\label{Left-invariant geodesic flows of a Lie group}
In this section, we review the Hamiltonian formalism for the geodesic flow of a Lie group with respect to a left-invariant (pseudo-)Riemannian metric. 
By Lie-Poisson Reduction, the system can be reduced onto the dual to the Lie algebra and the reduced system can be described by the Lie-Poisson equation. 
Then, we focus on the step-two nilpotent Lie groups equipped with a left-invariant (pseudo-)Riemannian metric and describe the Lie-Poisson equation explicitly on the Lie algebra which is identified with the dual space. 
See \cite{abraham_marsden,bauer_tarama_2018,ratiu_tarama_2020} for the details on the Lie-Poisson formulation of the geodesic flows of Lie groups. 

\subsection{Review on the Lie-Poisson equation}
Let $G$ be a Lie group. 
By $\mathfrak{g}$, we denote the corresponding Lie algebra. 
Through the left-translations, the tangent and the cotangent bundles of $G$ are trivialized as 
\begin{align*}
  &TG\supset T_q G\ni X \mapsto \left( q,\left( \dd L_{q^{-1}} \right)_{q}X \right)\in G\times\mathfrak{g},
  \\
  &T^{\ast}G\supset T_q^{\ast}G\ni \xi \mapsto \left( q, \left( L_q \right)^{\ast} \xi \right)\in G\times\mathfrak{g}^{\ast},
\end{align*}
where $q\in G$ and $L_q\colon G\ni h\mapsto q h\in G$. 

We assume that the Lie group $G$ is equipped with a left-invariant (pseudo-)Riemannian metric $\displaystyle \left\langle \cdot, \cdot\right\rangle=\left(\left\langle \cdot, \cdot\right\rangle_q\right)_{q\in G}$, where $\langle \cdot, \cdot\rangle_q: T_q G\times T_q G\rightarrow \mathbb{R}$ is a scalar product, i.e. a non-degenerate symmetric bilinear form. 
The (pseudo-)Riemannian metric $\langle \cdot, \cdot\rangle$ can be identified with the scalar product $\langle \cdot, \cdot\rangle_{e}$ on $\mathfrak{g}\cong T_eG$, where $e\in G$ denotes the unit, through the left-trivialization $T G\cong G\times \mathfrak{g}$.
We write the scalar product $\langle \cdot, \cdot\rangle_e: \mathfrak{g}\times \mathfrak{g}\rightarrow \mathbb{R}$ as $\langle \cdot, \cdot\rangle$ for brevity. 

Through the (pseudo-)Riemannian metric $\langle \cdot, \cdot\rangle$, the tangent bundle $TG$ and the cotangent bundle $T^{\ast}G$ are identified and this identification is compatible with the above left-trivializations of the bundles, $TG\cong G\times \mathfrak{g}$ and $T^{\ast}G\cong G\times \mathfrak{g}^{\ast}$. 
Here, $G\times \mathfrak{g}(\cong TG)$ and $G\times \mathfrak{g}^{\ast}(\cong T^{\ast}G)$ are identified through the scalar product $\langle \cdot, \cdot\rangle: \mathfrak{g}\times \mathfrak{g}\rightarrow \mathbb{R}$ as 
\begin{align}\label{identification}
  G\times\mathfrak{g}\ni \left( q,Y \right)\mapsto \left( q, \left\langle Y,\cdot \right\rangle \right)\in G\times\mathfrak{g}^{\ast}.
\end{align}

On the cotangent bundle $T^{\ast}G$, the canonical one-form $\Theta$ is defined through $\Theta_{\alpha_q}\left(\widetilde{X}\right)=\alpha_q\left(\mathsf{d}\pi_{\alpha_q}\widetilde{X}\right)$, where $\alpha_q\in T_q^{\ast}G\subset T^{\ast}G$, $\widetilde{X}\in T_{\alpha_q}\left(T^{\ast}G\right)$, and $\pi:T^{\ast}G\rightarrow G$ is the canonical projection. 
Then, the canonical symplectic form $\Omega$ is defined by $\Omega=-\mathsf{d}\Theta$. 
Through the identification $T^{\ast}G\cong G\times \mathfrak{g}^{\ast}$, we describe the canonical one-form $\Theta$ and the canonical symplectic form $\Omega$ on $G\times \mathfrak{g}^{\ast}$ as 
\begin{align*}
  \Theta_{\left(q, \mu \right)}\left( U,\nu \right)&=\mu\left( U \right),
  \\
  \Omega_{\left(q, \mu \right)}\left( \left( U,\nu \right),\left( U',\nu' \right) \right)&=\nu'\left( U \right) -\nu\left( U' \right) +\mu\left( \left[ U,U' \right] \right),
\end{align*}
where $\left(q, \mu \right)\in G\times\mathfrak{g}^{\ast}\cong T^{\ast}G$ and $\left( U, \nu \right),\left( U', \nu' \right)\in \mathfrak{g}\times\mathfrak{g}^{\ast}\cong T_{q}G\times\mathfrak{g}^{\ast}\cong T_{\left( q, \mu \right)}\left( T^{\ast}G \right)$. 
Here, the bracket $\left[ \cdot,\cdot \right]$ stands for the Lie bracket on $\mathfrak{g}$. 
See \cite[Proposition 4.4.1]{abraham_marsden} for the proof of the formula. 

Further, using the identification \eqref{identification}, we give another expression of the canonical one-form $\Theta$ and the canonical symplectic form $\Omega$ on $G\times \mathfrak{g}(\cong TG)$. 
Namely, we have 
\begin{align*}
  \Theta_{\left( q, Y \right)}\left( U,V \right)&=\left\langle Y,U \right\rangle, 
  \\
  \Omega_{\left( q, Y \right)}\left( \left( U,V \right),\left( U',V' \right) \right)&=\left\langle V',U\right\rangle - \left\langle V,U' \right\rangle + \left\langle Y, \left[ U,U' \right] \right\rangle,
\end{align*}
where $\left( q, Y \right)\in G\times \mathfrak{g}\cong TG$ and $\left( U,V \right),\left( U',V' \right)\in \mathfrak{g}\times\mathfrak{g}\cong T_qG\times \mathfrak{g}\cong T_{(q,Y)}\left(TG\right)$. 

The (pseudo-)Riemannian metric $\langle\cdot, \cdot \rangle$ on $G$ induces the one on the tangent bundle $TG\cong G\times\mathfrak{g}$ described as
\begin{align}
  \left\langle \left\langle \left( U,V \right),\left( U',V' \right) \right\rangle \right\rangle = \left\langle U,U' \right\rangle + \left\langle V,V' \right\rangle,
  \label{induced metric on TG}
\end{align}
where $\left( U,V \right),\left( U',V' \right)\in \mathfrak{g}\times\mathfrak{g}\cong T_{q}G\times\mathfrak{g}\cong T_{\left( q, Y \right)}\left( TG \right)$.

The Hamiltonian vector field $\widetilde{\Xi}_{H}$ for the Hamiltonian function $H\in\mathcal{C}^\infty\left( G\times \mathfrak{g} \right)$ is given as
\begin{align*}
  \widetilde{\Xi}_{H}\left( q,Y \right)=\left( V,\left( \ad_{V} \right)^{\mathrm{T}}Y-U \right).
\end{align*}
Here, $\mathrm{grad}_{\left( q, Y \right)}H=\left( U,V \right)\in\mathfrak{g}\times \mathfrak{g}\cong T_qG\times \mathfrak{g}$ denotes the gradient vector of $H$ at $\left( q, Y \right)\in TG\cong G\times\mathfrak{g}$ with respect to the (pseudo-)Riemannian metric \eqref{induced metric on TG} on $TG\cong G\times\mathfrak{g}$:
\begin{align*}
  \left\langle \left\langle \mathrm{grad}_{\left( q, Y \right)}H,\left( U',V' \right) \right\rangle \right\rangle=\left( \dd H \right)_{\left( q, Y \right)}\left( U', V' \right), 
  \quad \left( U',V' \right)\in\mathfrak{g}\times\mathfrak{g}\cong T_qG\times \mathfrak{g}.
\end{align*}
On the other hand, $\left( \ad_{V} \right)^\mathrm{T}\in\mathrm{End}(\mathfrak{g})$ stands for the adjoint operator of $\ad_{V}$ with respect to the scalar product $\langle\cdot,\cdot\rangle$ on $\mathfrak{g}$:
\begin{align}\label{eq_adT}
  \left\langle \left( \ad_{V} \right)^{\mathrm{T}}Y,Z \right\rangle = \left\langle Y,\ad_{V}Z \right\rangle, 
  \quad Y,Z\in\mathfrak{g}. 
\end{align}

The Poisson bracket $\left\{\cdot, \cdot\right\}$ associated to the canonical symplectic form $\Omega$ is defined by $\{F, F^{\prime}\}=\Omega\left(\widetilde{\Xi}_{F}, \widetilde{\Xi}_{F^{\prime}}\right)$, $F, F^{\prime}\in \mathcal{C}^{\infty}(T^{\ast}G)$. 
On $G\times \mathfrak{g}(\cong TG\cong T^{\ast}G)$, it is written as 
\begin{align*}
  \left\{ F,F' \right\}\left( q,Y \right)=-\left\langle U',V \right\rangle +\left\langle U,V' \right\rangle - \left\langle Y, \left[ V,V' \right] \right\rangle,
\end{align*}
where $F,F'\in\mathcal{C}^\infty\left( G\times \mathfrak{g} \right)$ and $\left( U,V \right)=\mathrm{grad}_{\left( q,Y \right)}F,\,\left( U',V' \right)=\mathrm{grad}_{\left( q,Y \right)}F'\in\mathfrak{g}\times\mathfrak{g}\cong T_qG\times \mathfrak{g}$. 
If the Hamiltonian $H\in\mathcal{C}^\infty\left( TG \right)$ is left-invariant, i.e. if it does not depend on $q\in G$ as a function on $G\times \mathfrak{g}\left(\cong TG\right)$, we have $\mathrm{grad}_{\left( q,Y \right)}H=\left( 0,V \right)$, and thus the Hamilton equation\footnote{In\cite[p.497]{bauer_tarama_2018}, the Hamilton equation should be read as in \eqref{hamilton_eq} of the present paper with $q$ being replaced by $p$.} is written as
\begin{align}\label{hamilton_eq}
  \begin{cases}
    \displaystyle\frac{\dd q}{\dd t} = \left( \dd L_{q} \right)_{e} V,
    \\[7pt]
    \displaystyle\frac{\dd Y}{\dd t} = \left( \ad_{V} \right)^{\mathrm{T}}Y.
  \end{cases}
\end{align}
The Hamiltonian function $H$ depends only on $Y$ and hence so is $V$. 
Thus, the first component of \eqref{hamilton_eq} can be solved in terms of $q\in G$ by using the solution in $Y$ to the second component. 

The second component of \eqref{hamilton_eq} is regarded as a Hamilton equation also with respect to the Lie-Poisson bracket on $\mathfrak{g}\cong \mathfrak{g}^{\ast}$. 
The Lie-Poisson bracket on $\mathfrak{g}$ is defined through $\{f, f^{\prime}\}(Y)=-\left\langle Y, \left[\nabla f(Y), \nabla f^{\prime}(Y)\right]\right\rangle$, $f, f^{\prime}\in \mathcal{C}^{\infty}(\mathfrak{g})$. 
Here, $\nabla f(Y)\in \mathfrak{g}$ is the gradient vector of $f$ at $Y\in \mathfrak{g}$ with respect to the scalar product $\langle \cdot, \cdot \rangle$: $\langle \nabla f(Y), Z\rangle =\left(\mathsf{d}f\right)_Y\left(Z\right)$ for all $Z\in \mathfrak{g}$. 
With respect to the Lie-Poisson bracket, we define the Hamiltonian vector field $\Xi_f$ for the Hamiltonian $f\in \mathcal{C}^{\infty}(\mathfrak{g})$ through 
$\Xi_f(f^{\prime})=-\{f, f^{\prime}\}$ for all $f^{\prime}\in \mathcal{C}^{\infty}(\mathfrak{g})$. 
With respect to the Lie-Poisson bracket, the second component of \eqref{hamilton_eq}, known as the Lie-Poisson equation, is the Hamilton equation for the Hamiltonian $h$ on $\mathfrak{g}$, which is induced from the left-invariant function $H$ on $TG\cong G\times \mathfrak{g}$. 
This reduction procedure is called the \textit{Lie-Poisson Reduction}. 
See \cite{ratiu_et_al_2005} for the details of Lie-Poisson Reduction. 

It is a well-known fact that a Poisson manifold can be stratified into the disjoint union of symplectic manifolds. 
In the case of a Lie-Poisson space, i.e. the dual space to a Lie algebra equipped with the Lie-Poisson bracket, is stratified by coadjoint orbits on which the orbit symplectic forms are naturally induced. 
(See e.g. \cite{marsden_ratiu_1999} for the details about the symplectic stratifications.) 
In the current situation, through the identification $\mathfrak{g}\ni Y \mapsto \langle Y,\cdot\rangle\in\mathfrak{g}^{\ast}$, the stratification of the Lie-Poisson space $\left(\mathfrak{g}, \left\{\cdot,\cdot\right\}\right)$ is given by the coadjoint orbits 
\begin{align*}
  \mathcal{O} = \Set{\left( \Ad_{q}  \right)^{\mathrm{T}}Y | q\in G},
\end{align*}
where $Y\in \mathfrak{g}$. 
On the leaf $\mathcal{O}$, the orbit symplectic form is induced through 
\begin{align*}
  \omega_{Y}\left( \left( \ad_U \right)^{\mathrm{T}}Y,\left( \ad_{U'} \right)^{\mathrm{T}}Y\right)=-\left\langle Y,\left[ U,U' \right]\right\rangle,
\end{align*}
where $Y\in\mathcal{O}$ and $U,U'\in\mathfrak{g}$. 
Recall that the tangent space $T_Y\mathcal{O}$ to the orbit $\mathcal{O}$ is given as $T_Y\mathcal{O}=\left\{\left.\left(\mathrm{ad}_U\right)^{\mathrm{T}}Y\right| U\in\mathfrak{g}\right\}$.  
By the generalities of the symplectic stratification of a Poisson manifold, the Hamiltonian vector field $\Xi_h$ for the Hamiltonian $h\in \mathcal{C}^{\infty}(\mathfrak{g})$ can be restricted to the leaf $\mathcal{O}$
and the restricted vector field on $\mathcal{O}$ is also Hamiltonian with respect to the orbit symplectic form $\omega$ and the Hamiltonian function $h|_{\mathcal{O}}$. 

The restriction of the Lie-Poisson equation to the symplectic manifold $\left(\mathcal{O}, \omega\right)$ coincides also with the reduced Hamilton equation through Marsden-Weinstein Reduction. 
On $T^{\ast}G\cong G\times \mathfrak{g}^{\ast}$, we consider the $\Ad^{\ast}$-equivariant momentum mapping $J: T^{\ast}G\cong G\times \mathfrak{g}^{\ast}\ni (q,\mu)\mapsto \Ad_{q^{-1}}^{\ast}\mu\in \mathfrak{g}^{\ast}$. 
Through the identification $(G\times \mathfrak{g}\cong )TG\cong T^{\ast}G$ via the (pseudo-)Riemannian metric $\langle \cdot, \cdot\rangle$, the momentum mapping is written as $J: TG\cong G\times \mathfrak{g}\ni (q,Y)\mapsto \left(\Ad_{q^{-1}}\right)^{\mathrm{T}}Y\in \mathfrak{g}$. 
We take an element $Y_0\in \mathcal{O}\subset \mathfrak{g}$ and then the reduced phase space is given as $G_{Y_0}\backslash J^{-1}(Y_0)\cong \mathcal{O}$, where 
$G_{Y_0}:=\left\{q\in G\left| \left(\Ad_q\right)^{\mathrm{T}}(Y_0)=Y_0\right.\right\}$ is the isotropy subgroup.
The Hamiltonian system $\left(\mathcal{O}, \omega, h|_{\mathcal{O}}\right)$ coincides with the reduced system from $\left( TG, \Omega, H \right)$. 

Through Marsden-Weinstein Reduction Theorem, an equilibrium of the Lie-Poisson equation restricted on the symplectic leaf $\mathcal{O}$ can be regarded as a {\it relative equilibrium} of the original Hamiltonian system $\left( TG, \Omega, H \right)$. 
See e.g. \cite{arnold_1989} for the notion of relative equilibria. 

\subsection{Lie-Poisson equation for step-two nilpotent Lie groups}\label{Lie-Poisson equation for step-two nilpotent Lie groups}\label{subsec_lp}

We apply the general framework of the left-invariant geodesic flows of Lie groups disscused in the previous section to step-two nilpotent Lie groups.

Let $G$ be a Lie group. The corresponding Lie algebra is denoted by $\mathfrak{g}$ as in the previous subsection. We assume that the Lie group $G$ is step-two nilpotent in the following sense:
\begin{definition}
The Lie group $G$ is called {\it{step-two nilpotent}} if the corresponding Lie algebra $\mathfrak{g}$ is step-two nilpotent, that is, $\mathfrak{g}$ satisfies $\left[\mathfrak{g},\mathfrak{g}\right]\subset\mathfrak{z}$, where $\mathfrak{z}$ is the centre of $\mathfrak{g}$.
\end{definition}

We take a scalar product $\langle \cdot, \cdot\rangle$ on $\mathfrak{g}$ and assume that its restriction $\langle \cdot, \cdot\rangle_{\mathfrak{z}}:=\langle \cdot, \cdot\rangle|_{\mathfrak{z}\times\mathfrak{z}}$ to $\mathfrak{z}$ is non-degenerate and hence again a scalar product on $\mathfrak{z}$. 
Then, setting $\mathfrak{v}:=\left\{V\in \mathfrak{g}\mid \forall Z\in\mathfrak{z},\; \langle V, Z\rangle=0\right\}$, we have $\mathfrak{g}=\mathfrak{v}\dot{+}\mathfrak{z}$ and $\left[\mathfrak{v}, \mathfrak{v}\right]\subset \mathfrak{z}$. 
(See \cite[Lemma 23, p.49]{oneill_1983}, following which $\mathfrak{v}$ should be called the orthogonal perp of $\mathfrak{z}$ and denoted as $\mathfrak{v}=\mathfrak{z}^{\perp}$.) 
Note that the restriction $\langle \cdot, \cdot \rangle_{\mathfrak{v}}:=\langle \cdot, \cdot \rangle|_{\mathfrak{v}\times\mathfrak{v}}$ of the scalar product $\langle \cdot, \cdot \rangle$ to ${\mathfrak{v}}$ is also a non-degenerate scalar product since $\mathfrak{v}^{\perp}=\left(\mathfrak{z}^{\perp}\right)^{\perp}=\mathfrak{z}$. 
(See \cite[Lemma 22, p.49]{oneill_1983}.)
\begin{remark}
A Lie algebra is said to be {\it{step-$r$ graded}} if it is decomposed into a direct sum of vector subspaces $\mathfrak{g}_{1}\dot{+}\cdots\dot{+}\mathfrak{g}_{r}$ satisfying $\left[\mathfrak{g}_{i},\mathfrak{g}_{j}\right]\subset \mathfrak{g}_{i+j}$ for all $i,j\in\left\{1, \ldots, r\right\}$, and $\mathfrak{g}_{s}=\left\{0\right\}$ for all $s>r$. 
(See e.g \cite[p.18]{montgomery_2002}.) For a step-two nilpotent Lie algebra $\mathfrak{g}$ equipped with a scalar product, we have $\mathfrak{g}=\mathfrak{g}_{1}\dot{+}\mathfrak{g}_{2}$ and $\left[\mathfrak{g}_{1},\mathfrak{g}_{1}\right]\subset \mathfrak{g}_{2}$, where $\mathfrak{g}_{1}\coloneq\mathfrak{v},\,\mathfrak{g}_{2}\coloneq \mathfrak{z}$. Hence, the step-two nilpotent Lie algebra $\mathfrak{g}$ equipped with a scalar product is step-two graded.
\end{remark}

Now, we consider the linear operator $j: \mathfrak{z}\rightarrow \mathrm{End}\left(\mathfrak{v}\right)$, called as ``$j$-mapping,'' defined through 
\[
\left\langle \left[ V, V^{\prime} \right],Z\right\rangle_{\mathfrak{z}}=\left\langle j\left( Z \right)V,V^{\prime}\right\rangle_{\mathfrak{v}}, \qquad V, V^{\prime}\in \mathfrak{v}, Z\in\mathfrak{z}. 
\]
See \cite{eberlein_1994} for more information around the ``$j$-mapping.'' 
By the skew-symmetry of the Lie bracket, it is obvious that, for any $Z\in \mathfrak{z}$, $j(Z)\in \mathrm{End}\left(\mathfrak{v}\right)$ is skew-symmetric with respect to the scalar product $\langle \cdot, \cdot\rangle_{\mathfrak{v}}$, namely the image of the mapping $j$ is in the Lie algebra of skew-symmetric linear endomorphisms of $\mathfrak{v}$ with respect to the scalar product $\langle \cdot, \cdot\rangle_{\mathfrak{v}}$: $j\left(\mathfrak{z}\right)\subset \mathfrak{so}\left( \mathfrak{v},\langle\cdot,\cdot\rangle_{\mathfrak{v}} \right)$. 

\medskip

In terms of the mapping $j: \mathfrak{z}\rightarrow \mathfrak{so}\left(\mathfrak{v}, \langle \cdot, \cdot\rangle_{\mathfrak{v}}\right)$, we more explicitly describe the Lie-Poisson equation for the Hamiltonian system associated to a left-invariant Hamiltonian function on the (co)tangent bundle of the step-two nilpotent Lie group $G$ with respect to the left-invariant pseudo-Riemannian metric associated to the scalar product $\langle\cdot, \cdot \rangle$. 
With respect to the direct sum decomposition $\mathfrak{g}=\mathfrak{v}\dot{+}\mathfrak{z}$, an element $Y\in \mathfrak{g}$ of the Lie algebra is uniquely written as $Y=Y_{\mathfrak{v}}+Y_{\mathfrak{z}}$, where $Y_{\mathfrak{v}}\in\mathfrak{v}$, $Y_{\mathfrak{z}}\in\mathfrak{z}$. 
For any $f, f^{\prime}\in \mathcal{C}^{\infty}\left(\mathfrak{g}\right)$, the Lie-Poisson bracket is given as 
\begin{align*}
  \left\{ f, f^{\prime} \right\}\left( Y \right) 
  = 
  -\left\langle Y_{\mathfrak{z}}, \left[ V_{\mathfrak{v}}, V_{\mathfrak{v}}^{\prime} \right]\right\rangle
  =
  -\left\langle j\left(Y_{\mathfrak{z}}\right)V_{\mathfrak{v}}, V_{\mathfrak{v}}^{\prime} \right\rangle, 
\end{align*} 
where $V=V_{\mathfrak{v}}+V_{\mathfrak{z}}=\mathrm{grad}_{Y}f, \,V^{\prime}=V_{\mathfrak{v}}^{\prime}+V_{\mathfrak{z}}^{\prime}=\mathrm{grad}_{Y}f'\in \mathfrak{g}=\mathfrak{v}\dot{+}\mathfrak{z}$. 
It should be pointed out that if a function $f=f(Y)$, $Y\in\mathfrak{g}$, on the Lie algebra only depends on the centre component $Y_{\mathfrak{z}}\in\mathfrak{z}$ of $Y$, then it is a Casimir function: $\left\{f, \cdot\right\}=0$. 

For any $U\in \mathfrak{g}$, the coadjoint mapping $\left(\mathrm{ad}_U\right)^{\mathrm{T}}: \mathfrak{g}\rightarrow \mathfrak{g}$ defined through \eqref{eq_adT} can explicitly be given as 
\begin{align}\label{adjoint rep}
\left(\mathrm{ad}_U\right)^{\mathrm{T}}(Y)=j\left( Y_{\mathfrak{z}}\right)U_{\mathfrak{v}}, \qquad Y\in \mathfrak{g}\cong\mathfrak{g}^{\ast}. 
\end{align}
If the step-two nilpotent Lie group $G$ is connected, we can concretely describe the coadjoint orbit $\mathcal{O}\subset\mathfrak{g}$ through $Y\in\mathfrak{g}$ as in the following proposition:
\begin{proposition}\label{coadjoint orbit of step-two}
If the Lie group $G$ is connected step-two nilpotent, the coadjoint orbit $\mathcal{O}\subset\mathfrak{g}$ through $Y\in\mathfrak{g}$ is given as
\[
\mathcal{O} = Y + \mathrm{Im}\left(j\left(Y_{\mathfrak{z}}\right)\right)\coloneq\Set{Y+j\left(Y_{\mathfrak{z}}\right)U_{\mathfrak{v}} | U\in\mathfrak{g}}.
\]
In particular, the coadjoint orbit $\mathcal{O}$ is a linear manifold.
\hfill $\square$
\end{proposition}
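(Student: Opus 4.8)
The plan is to reduce the group-level coadjoint action to a linear-algebraic identity on $\mathfrak{g}$, using the step-two condition to linearise $\Ad$. First I would record the elementary fact that $\left(\ad_X\right)^2=0$ for every $X\in\mathfrak{g}$: for any $Z\in\mathfrak{g}$ one has $[X,Z]\in[\mathfrak{g},\mathfrak{g}]\subset\mathfrak{z}$, and since $\mathfrak{z}$ is the centre, $\left[X,[X,Z]\right]=0$; in fact the same computation gives $\ad_X\circ\ad_{X'}=0$ for all $X,X'\in\mathfrak{g}$. Consequently, for any $X\in\mathfrak{g}$,
\[
  \Ad_{\exp X}=\exp\left(\ad_X\right)=\mathrm{id}_{\mathfrak{g}}+\ad_X,
\]
and taking adjoints with respect to the scalar product $\langle\cdot,\cdot\rangle$ (a linear operation fixing $\mathrm{id}_{\mathfrak{g}}$) yields $\left(\Ad_{\exp X}\right)^{\mathrm{T}}=\mathrm{id}_{\mathfrak{g}}+\left(\ad_X\right)^{\mathrm{T}}$.

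Next I would use that the exponential map of a connected nilpotent Lie group is surjective, so that every $g\in G$ has the form $g=\exp X$ with $X\in\mathfrak{g}$. (Alternatively, a connected group is generated by $\exp(\mathfrak{g})$, and $\ad_X\circ\ad_{X'}=0$ gives $\Ad_{\exp X_1\cdots\exp X_k}=\mathrm{id}_{\mathfrak{g}}+\ad_{X_1+\cdots+X_k}$, which suffices equally well.) Combining this with the explicit coadjoint formula \eqref{adjoint rep}, we obtain, for $Y\in\mathfrak{g}$,
\[
  \left(\Ad_{\exp X}\right)^{\mathrm{T}}Y=Y+\left(\ad_X\right)^{\mathrm{T}}Y=Y+j\left(Y_{\mathfrak{z}}\right)X_{\mathfrak{v}}.
\]
Hence $\mathcal{O}=\Set{Y+j\left(Y_{\mathfrak{z}}\right)X_{\mathfrak{v}}|X\in\mathfrak{g}}$. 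Since $X\mapsto X_{\mathfrak{v}}$ maps $\mathfrak{g}$ onto $\mathfrak{v}$ and $j\left(Y_{\mathfrak{z}}\right)\in\mathrm{End}(\mathfrak{v})$, the set $\Set{j\left(Y_{\mathfrak{z}}\right)X_{\mathfrak{v}}|X\in\mathfrak{g}}$ is exactly $\mathrm{Im}\left(j\left(Y_{\mathfrak{z}}\right)\right)$, so $\mathcal{O}=Y+\mathrm{Im}\left(j\left(Y_{\mathfrak{z}}\right)\right)$; being a coset of the linear subspace $\mathrm{Im}\left(j\left(Y_{\mathfrak{z}}\right)\right)\subset\mathfrak{v}\subset\mathfrak{g}$, this is a linear (affine) submanifold.

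There is no serious obstacle here: the entire content is the nilpotency identity $\left(\ad_X\right)^2=0$ together with the surjectivity of $\exp$ on a connected nilpotent group. The only points needing a little care are bookkeeping ones: that $(\mathrm{id}_{\mathfrak{g}}+A)^{\mathrm{T}}=\mathrm{id}_{\mathfrak{g}}+A^{\mathrm{T}}$ for a nondegenerate (possibly indefinite) scalar product, and that the operator appearing in \eqref{adjoint rep} is indeed $\left(\ad_X\right)^{\mathrm{T}}$ as characterised by \eqref{eq_adT}. It is also worth noting, for internal consistency, that $Y_{\mathfrak{z}}$ stays constant along $\mathcal{O}$ — the increment $j\left(Y_{\mathfrak{z}}\right)X_{\mathfrak{v}}$ lies in $\mathfrak{v}$ and so does not affect the $\mathfrak{z}$-component — which matches the earlier observation that functions of $Y_{\mathfrak{z}}$ are Casimirs.
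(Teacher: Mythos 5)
Your argument is correct and follows essentially the same route as the paper's proof: surjectivity of the exponential map for a connected nilpotent Lie group, the identity $\Ad_{\exp X}=\exp\left(\ad_X\right)=\mathrm{id}_{\mathfrak{g}}+\ad_X$ from $\left(\ad_X\right)^2=0$, passage to the transpose with respect to the scalar product, and the formula \eqref{adjoint rep}. The only (harmless) variations are that you transpose before rather than after expanding the exponential, and that you note the alternative via $\ad_X\circ\ad_{X'}=0$ that avoids invoking surjectivity of $\exp$.
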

\begin{proof}
It is well-known that, if the nilpotent Lie group $G$ is connected, the exponential map $\exp_{G}\colon\mathfrak{g}\to G$ of $G$ is surjective. (See e.g. \cite[Corollary 11.2.7, p.446]{hilgert-neeb_2012}.) 
Recall the well-known formula $\Ad_{\exp_{G} U}=\exp\left(\ad_{U}\right)$ for $U\in\mathfrak{g}$. (See e.g. \cite[\S 9.2.3]{hilgert-neeb_2012}.) Here, by $\exp\colon \mathrm{End}\left(\mathfrak{g}\right)\to GL\left(\mathfrak{g}\right)$, we denote the matrix exponential function: $\displaystyle\exp\left(A\right)=\sum_{n=0}^{\infty}\frac{A^{n}}{n!},\, A\in\mathrm{End}(\mathfrak{g})$. Using it, we have
\begin{align*}
\left\langle \left(\Ad_{\exp_{G} U}\right)^{\mathrm{T}}Y, X\right\rangle&=\left\langle Y,\left(\Ad_{\exp_{G} U}\right)X \right\rangle
=\left\langle Y, \exp\left(\ad_{U}\right) X\right\rangle=\left\langle \exp\left(\left(\ad_{U}\right)^{\mathrm{T}}\right)Y, X\right\rangle,
\end{align*}
where $X,Y,U\in\mathfrak{g}$, and hence we obtain $\left(\Ad_{\exp_{G} U}\right)^{\mathrm{T}}=\exp\left(\left(\ad_{U}\right)^{\mathrm{T}}\right)$. 
Since $\mathfrak{g}$ is a step-two nilpotent Lie algebra, we have $\left(\mathrm{ad}_U\right)^2=0$, which yields $\exp\left(\left(\ad_{U}\right)^{\mathrm{T}}\right)=\mathrm{id}_{\mathfrak{g}}+\left(\ad_{U}\right)^{\mathrm{T}}$. 
Consequently, we have $\left(\Ad_{\exp_{G} U}\right)^{\mathrm{T}}=\mathrm{id}_{\mathfrak{g}}+\left(\ad_{U}\right)^{\mathrm{T}}$, and the coadjoint orbit $\mathcal{O}$ through $Y\in\mathfrak{g}$ is given as 
\[
\mathcal{O}=\Set{\left(\Ad_{\exp_{G} U}\right)^{\mathrm{T}}Y | U\in\mathfrak{g}}=\Set{Y+\left(\ad_{U}\right)^{\mathrm{T}}Y | U\in\mathfrak{g}}=\Set{Y+j\left(Y_{\mathfrak{z}}\right)U_{\mathfrak{v}} | U\in\mathfrak{g}},
\]
by the surjectivity of the exponential map and the formula \eqref{adjoint rep}.
\end{proof}
In particular, the tangent space $T_Y\mathcal{O}$ to the coadjoint orbit $\mathcal{O}\subset \mathfrak{g}$ through $Y\in\mathfrak{g}$ is given as 
\[
T_Y\mathcal{O}=\mathrm{Im}\left(j\left( Y_{\mathfrak{z}}\right)\right)=\left(j\left( Y_{\mathfrak{z}}\right)\right)\left(\mathfrak{v}\right). 
\]
At the point $Y\in \mathcal{O}$, the orbit symplectic form $\omega$ is given as 
\begin{align*}
  \omega_{Y}\left( \left( \left( \ad_{U} \right)^{\mathrm{T}}Y \right),\left( \left(\ad_{U'} \right)^{\mathrm{T}}Y \right) \right)
  =
  -\left\langle Y_{\mathfrak{z}}, \left[ U_{\mathfrak{v}},U'_{\mathfrak{v}} \right] \right\rangle,
\end{align*}
where $U=U_{\mathfrak{v}}+U_{\mathfrak{z}},\,U'=U'_{\mathfrak{v}}+U'_{\mathfrak{z}}\in\mathfrak{g}=\mathfrak{v}\dot{+}\mathfrak{z}$. 

For a left-invariant Hamiltonian $H\in\mathcal{C}^\infty\left( TG \right)$, the reduced Hamiltonian function on $\mathfrak{g}$ is the restriction $h=H\left|_{\left\{ e \right\}\times\mathfrak{g}}\right.$ to $\left\{ e \right\}\times\mathfrak{g}\subset G\times\mathfrak{g}\cong TG$. 
By using the linear map $j\colon\mathfrak{z}\to \mathfrak{so}\left( \mathfrak{v},\langle\cdot,\cdot\rangle_{\mathfrak{v}} \right)$, the Hamiltonian vector field $\Xi_{h}$ for $h\in\mathcal{C}^\infty\left( \mathfrak{g} \right)$ is written as 
\begin{align*}
    \left( \Xi_{h} \right)_Y =j\left( Y_{\mathfrak{z}} \right)V_{\mathfrak{v}},
\end{align*}
where $V=V_{\mathfrak{v}}+V_{\mathfrak{z}}=\mathrm{grad}_{Y}h\in \mathfrak{g}=\mathfrak{v}\dot{+}\mathfrak{z}$, and hence the Lie-Poisson equation is described as
\begin{align}
  \begin{cases}
    \displaystyle\frac{\dd Y_{\mathfrak{v}}}{\dd t}=j\left( Y_{\mathfrak{z}} \right)V_{\mathfrak{v}},
    \\[7pt]
    \displaystyle\frac{\dd Y_{\mathfrak{z}}}{\dd t}=0. \label{reduced_hamilton_eq}
  \end{cases}
\end{align}
In particular, the reduced Hamiltonian function for the geodesic flow with respect to the left-invariant pseudo-Riemannian metric is defined through $\displaystyle h\left( Y \right)=\frac{1}{2}\left\langle Y,Y\right\rangle$ and the Hamilton equation \eqref{reduced_hamilton_eq} is given as
\begin{align}
  \begin{cases}
    \displaystyle\frac{\dd Y_{\mathfrak{v}}}{\dd t}=j\left( Y_{\mathfrak{z}} \right)Y_{\mathfrak{v}},
    \\[7pt]
    \displaystyle\frac{\dd Y_{\mathfrak{z}}}{\dd t}=0. \label{eq_geodesic_flow}
  \end{cases}
\end{align}

The Lie-Poisson equation \eqref{eq_geodesic_flow} can be restricted to the coadjoint orbit $\mathcal{O}$ and it is nothing but the reduced system obtained from the geodesic flow through Marsden-Weinstein Reduction. 
In the subsequent section, we analyse the stability of the equilibrium points for the equation \eqref{eq_geodesic_flow}. 
By the stability of equilibria, we mean the property about the restricted system of \eqref{eq_geodesic_flow} to a generic, i.e. maximal-dimensional, orbit $\mathcal{O}$. 
Before giving the detailed account on the analysis, we review certain classes of step-two nilpotent Lie groups in the next subsection. 

\begin{remark}\label{remark on degenerate case}
Fixing the (pseudo-)Riemannian metric $\langle \cdot, \cdot\rangle$, we can think of more general left-invariant metrics $g=\left(g_q\right)_{q\in G}$ for which the restriction of $g_e:\mathfrak{g}\times \mathfrak{g}\rightarrow \mathbb{R}$ to $\mathfrak{z}\times \mathfrak{z}$ needs not be non-degenerate. 
In fact, a left-invariant (pseudo-)Riemannian metric $g$ can be described as 
\[
g_e\left(U, V\right)
=
\left\langle U_{\mathfrak{v}}, g_{11}V_{\mathfrak{v}} + g_{12}V_{\mathfrak{z}}\right\rangle_{\mathfrak{v}}
+
\left\langle U_{\mathfrak{z}}, g_{21}V_{\mathfrak{v}} + g_{22}V_{\mathfrak{z}}\right\rangle_{\mathfrak{z}},
\]
for all $U=U_{\mathfrak{v}}+U_{\mathfrak{z}}, V=V_{\mathfrak{v}}+V_{\mathfrak{z}} \in \mathfrak{g}=\mathfrak{v}\dot{+}\mathfrak{z}$. 
Here, $g_{11}: \mathfrak{v}\rightarrow\mathfrak{v}$, $g_{12}: \mathfrak{z}\rightarrow\mathfrak{v}$, $g_{21}: \mathfrak{v}\rightarrow\mathfrak{z}$, $g_{22}: \mathfrak{z}\rightarrow\mathfrak{z}$ are linear and satisfy the relations $g_{11}^{\mathrm{T}}=g_{11}$, $g_{12}^{\mathrm{T}}=g_{21}$, $g_{21}^{\mathrm{T}}=g_{12}$, $g_{22}^{\mathrm{T}}=g_{22}$.  
Namely, 
\begin{align*}
\left\langle V, g_{11}V^{\prime}\right\rangle_{\mathfrak{v}}
=
\left\langle g_{11}V, V^{\prime}\right\rangle_{\mathfrak{v}}, \quad 
\left\langle V, g_{12}Z\right\rangle_{\mathfrak{v}}
=
\left\langle g_{21}V, Z\right\rangle_{\mathfrak{z}}, \quad 
\left\langle Z, g_{22}Z^{\prime}\right\rangle_{\mathfrak{z}}
=
\left\langle g_{22}Z, Z^{\prime}\right\rangle_{\mathfrak{z}},
\end{align*}
for all $V, V^{\prime}\in\mathfrak{v}$, $Z, Z^{\prime}\in\mathfrak{z}$. 
We denote the inverse of the mapping 
\[
\mathfrak{v}\oplus\mathfrak{z}\ni \left(V, Z\right)\mapsto \left(g_{11}(V)+g_{12}(Z), g_{21}(V)+g_{22}(Z)\right)\in\mathfrak{v}\oplus\mathfrak{z}
\]
by  
\[
\mathfrak{v}\oplus\mathfrak{z}\ni \left(V, Z\right)\mapsto \left(g^{11}(V)+g^{12}(Z), g^{21}(V)+g^{22}(Z)\right)\in\mathfrak{v}\oplus\mathfrak{z}. 
\]
Then, the linear mappings $g^{11}: \mathfrak{v}\to\mathfrak{v}$, $g^{12}: \mathfrak{z}\to\mathfrak{v}$, $g^{21}: \mathfrak{v}\to\mathfrak{z}$, $g^{22}: \mathfrak{z}\to\mathfrak{z}$ satisfy $\left(g^{ij}\right)^{\mathrm{T}}=g^{ji}$. Namely,
\begin{align*}
\left\langle V, g^{11}V^{\prime}\right\rangle_{\mathfrak{v}}
=
\left\langle g^{11}V, V^{\prime}\right\rangle_{\mathfrak{v}}, \quad 
\left\langle V, g^{12}Z\right\rangle_{\mathfrak{v}}
=
\left\langle g^{21}V, Z\right\rangle_{\mathfrak{z}}, \quad 
\left\langle Z, g^{22}Z^{\prime}\right\rangle_{\mathfrak{z}}
=
\left\langle g^{22}Z, Z^{\prime}\right\rangle_{\mathfrak{z}},
\end{align*}
for all $V, V^{\prime}\in\mathfrak{v}$, $Z, Z^{\prime}\in\mathfrak{z}$. 
Using the Legendre transform, the Hamiltonian for the geodesic flow with respect to the metric $g$ is given by the function 
\[
H^g\left(Y\right)
=
\dfrac{1}{2}\left(\left\langle Y_{\mathfrak{v}}, g^{11}Y_{\mathfrak{v}}+g^{12}Y_{\mathfrak{z}}\right\rangle_{\mathfrak{v}}+\left\langle Y_{\mathfrak{z}}, g^{21}Y_{\mathfrak{v}}+g^{22}Y_{\mathfrak{z}}\right\rangle_{\mathfrak{z}}\right), 
\quad Y=Y_{\mathfrak{v}}+Y_{\mathfrak{z}}\in\mathfrak{v}\dot{+}\mathfrak{z}, 
\]
on the Lie algebra $\mathfrak{g}$. 
Since $\mathrm{grad}H^g(Y)=g^{11}Y_{\mathfrak{v}}+g^{12}Y_{\mathfrak{z}}+ g^{21}Y_{\mathfrak{v}}+g^{22}Y_{\mathfrak{z}}$, the Lie-Poisson equation \eqref{reduced_hamilton_eq} is now written as 
\[
 \begin{cases}
    \displaystyle\frac{\dd Y_{\mathfrak{v}}}{\dd t}=j\left( Y_{\mathfrak{z}} \right)\left(g^{11}Y_{\mathfrak{v}}+g^{12}Y_{\mathfrak{z}}\right),
    \\[7pt]
    \displaystyle\frac{\dd Y_{\mathfrak{z}}}{\dd t}=0. 
  \end{cases}
\]
However, we do not discuss further properties of this general geodesic flow in the present paper.

\hfill $\square$
\end{remark}

\subsection{Certain classes of step-two nilpotent Lie groups}\label{Certain classes of step-two nilpotent Lie groups}
In this section, we review certain classes of step-two nilpotent Lie groups/algebras which appear in the literature. 

\subsubsection{Heisenberg Lie groups}\label{subsec_heis}
As an important example, we first review the Heisenberg Lie groups. 
See e.g. \cite[Chapter 1]{taylor_1986} for the details. 
For $\left(x, y, z\right), \left(x^{\prime}, y^{\prime}, z^{\prime}\right)\in \mathbb{R}^n\times \mathbb{R}^n\times\mathbb{R}$, the operation 
\[
\left(x, y, z\right)\ast\left(x^{\prime}, y^{\prime}, z^{\prime}\right)
:=
\left(x+x^{\prime}, y+y^{\prime}, z+z^{\prime}+\dfrac{x^{\mathrm{T}}y^{\prime}-y^{\mathrm{T}}x^{\prime}}{2}\right), 
\]
induces a Lie group structure on $\mathbb{R}^n\times \mathbb{R}^n\times \mathbb{R}$ and the group $\mathbb{H}_{2n+1}:=\left(\mathbb{R}^n\times \mathbb{R}^n\times \mathbb{R}, \ast\right)$ is called the $2n+1$-dimensional {\it Heisenberg Lie group}. 
The associated Lie algebra $\mathfrak{h}_{2n+1}$ is described as $\mathbb{R}^n\times \mathbb{R}^n\times \mathbb{R}$ equipped with the Lie bracket 
\[
\left[\left(\xi, \eta,\zeta\right), \left(\xi^{\prime}, \eta^{\prime},\zeta^{\prime}\right)\right]
=\left(0, 0, \xi^{\mathrm{T}}\eta^{\prime}-\eta^{\mathrm{T}}\xi^{\prime}\right) 
\]
for $\left(\xi, \eta,\zeta\right), \left(\xi^{\prime}, \eta^{\prime},\zeta^{\prime}\right)\in \mathbb{R}^n\times \mathbb{R}^n\times \mathbb{R}$. 
We consider the inner product 
\[
\left\langle\left(\xi, \eta,\zeta\right), \left(\xi^{\prime}, \eta^{\prime},\zeta^{\prime}\right)\right\rangle
=
\xi^{\mathrm{T}}\xi^{\prime}+\eta^{\mathrm{T}}\eta^{\prime}+\zeta\zeta^{\prime}, 
\quad \left(\xi, \eta,\zeta\right), \left(\xi^{\prime}, \eta^{\prime},\zeta^{\prime}\right)\in \mathbb{R}^n\times \mathbb{R}^n\times \mathbb{R}, 
\]
on $\mathfrak{h}_{2n+1}$ which extends to the left-invariant metric on $\mathbb{H}_{2n+1}$ through the left-translations. 
The centre $\mathfrak{z}$ of $\mathfrak{h}_{2n+1}$ is given as 
\[
\mathfrak{z}=\left\{\left(0,0,\zeta\right)\mid \zeta\in \mathbb{R}\right\}
\]
and its orthogonal complement is given as 
\[
\mathfrak{v}=\left\{\left(\xi,\eta,0\right)\mid \xi, \eta\in\mathbb{R}^n\right\}. 
\]
By definition, the $j$-mapping $J_{\zeta}: \mathfrak{v}\rightarrow \mathfrak{v}$ satisfies 
\[
\left\langle J_{\zeta}\left(\xi,\eta\right), \left(\xi^{\prime}, \eta^{\prime}\right)\right\rangle_{\mathfrak{v}}
=
\left\langle \left(0, 0, \zeta\right), \left[\left(\xi,\eta, 0\right), \left(\xi^{\prime}, \eta^{\prime}, 0\right)\right]\right\rangle
=
\left(\xi^{\mathrm{T}}, \eta^{\mathrm{T}}\right)\begin{pmatrix}0 & \zeta \mathsf{E}_n \\ -\zeta \mathsf{E}_n  & 0\end{pmatrix}\begin{pmatrix}\xi^{\prime} \\ \eta^{\prime}\end{pmatrix}, 
\]
where $\xi,\eta, \xi^{\prime}, \eta^{\prime}\in\mathbb{R}^n$, $\zeta, \zeta^{\prime}\in \mathbb{R}$. 
Thus, we have 
\[
J_{\zeta}
=
\begin{pmatrix}0 & -\zeta \mathsf{E}_n \\ \zeta \mathsf{E}_n  & 0\end{pmatrix}, \qquad \zeta\in \mathbb{R}. 
\]
In what follows, we mention several known classes of step-two nilpotent Lie groups, each of which is modeled after the Heisenberg Lie groups. 

\subsubsection{Step-two Carnot groups}\label{subsubsubsection_2_3_1}
A nilpotent Lie group is called a step-two {\it Carnot group} (or stratified group) if the corresponding Lie algebra $\mathfrak{g}$ is decomposed as $\mathfrak{g}=\mathfrak{v}\dot{+}\mathfrak{z}$ into the direct sum of the centre $\mathfrak{z}$ of $\mathfrak{g}$ and its complement $\mathfrak{v}$, which satisfy $\left[\mathfrak{v}, \mathfrak{v}\right]=\mathfrak{z}$. 
See e.g. \cite[\S 1.10]{montgomery_2002} for the details on general Carnot groups. 
Note that the Lie algebras of step-two Carnot groups are also called as non-degenerate step-two nilpotent Lie algebras. 
See \cite{borovoi_dina_degraaf_2021}. 
However, we do not use this terminology to avoid possible confusions. 

In view of the definition of the ``$j$-mapping,'' a step-two nilpotent Lie algebra $\mathfrak{g}$ equipped with a scalar product $\left\langle\cdot, \cdot \right\rangle$ which allows the orthogonal direct sum decomposition $\mathfrak{g}=\mathfrak{v}\dot{+}\mathfrak{z}$ into the centre $\mathfrak{z}$ and its complement $\mathfrak{v}$, the Lie algebra $\mathfrak{g}$ is Carnot if and only if $j\left( Z \right)\colon\mathfrak{v}\to\mathfrak{v}$ is surjective for any $Z\in\mathfrak{z}\setminus\left\{ 0 \right\}$. 
In particular, the dimension of the complement $\mathfrak{v}$ is necessarily even.

\paragraph{M\'{e}tivier groups}
As an important subclass of step-two Carnot groups, we consider the M\'{e}tivier groups. 
A Lie group $G$ is called a \textit{M\'{e}tivier group} (cf. \cite{metivier_1980}) if the corresponding Lie algebra $\mathfrak{g}$ has the (vector space) direct sum decomposition $\mathfrak{g}=\mathfrak{v}\dot{+}\mathfrak{z}$, where $\mathfrak{z}$ is the centre and $\mathfrak{v}$ is its complement and if for all non-zero linear functional $\eta\in \mathfrak{z}^{\ast}\setminus \{0\}$, the bilinear form $\mathfrak{v}\times \mathfrak{v}\ni (V,V^{\prime})\mapsto \eta\left(\left[V, V^{\prime}\right]\right)\in \mathbb{R}$ is non-degenerate. 

\begin{proposition}
A M\'{e}tivier group is a step-two Carnot group. 
\hfill $\square$
\end{proposition}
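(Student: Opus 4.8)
The plan is to verify the defining relation of a step-two Carnot group, namely $\left[\mathfrak{v},\mathfrak{v}\right]=\mathfrak{z}$, for the decomposition $\mathfrak{g}=\mathfrak{v}\dot{+}\mathfrak{z}$ attached to a M\'{e}tivier group. Since we work throughout this section within the class of step-two nilpotent Lie algebras, we already have $\left[\mathfrak{g},\mathfrak{g}\right]\subset\mathfrak{z}$; as $\mathfrak{z}$ is the centre, the brackets $\left[\mathfrak{v},\mathfrak{z}\right]$ and $\left[\mathfrak{z},\mathfrak{z}\right]$ vanish, so $\left[\mathfrak{v},\mathfrak{v}\right]=\left[\mathfrak{g},\mathfrak{g}\right]\subset\mathfrak{z}$. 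Hence it only remains to establish the reverse inclusion $\mathfrak{z}\subset\left[\mathfrak{v},\mathfrak{v}\right]$, after which $\mathfrak{g}$ is a step-two Carnot group by definition. Note that one may assume $\mathfrak{v}\neq\{0\}$, i.e.\ that $G$ is non-abelian, the abelian case being degenerate (there $\left[\mathfrak{v},\mathfrak{v}\right]=\mathfrak{z}$ forces $\mathfrak{z}=\{0\}$, which should be excluded from the statement).

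First I would argue by contraposition. Suppose $\left[\mathfrak{v},\mathfrak{v}\right]$ were a proper linear subspace of $\mathfrak{z}$. Then its annihilator $\left\{\eta\in\mathfrak{z}^{\ast}\mid \eta|_{\left[\mathfrak{v},\mathfrak{v}\right]}=0\right\}$ has dimension $\dim\mathfrak{z}-\dim\left[\mathfrak{v},\mathfrak{v}\right]>0$, so one can choose $\eta\in\mathfrak{z}^{\ast}\setminus\{0\}$ that vanishes on $\left[\mathfrak{v},\mathfrak{v}\right]$. For this $\eta$, the bilinear form $\mathfrak{v}\times\mathfrak{v}\ni\left(V,V^{\prime}\right)\mapsto\eta\left(\left[V,V^{\prime}\right]\right)\in\mathbb{R}$ is identically zero, hence degenerate (using $\mathfrak{v}\neq\{0\}$), which contradicts the defining property of a M\'{e}tivier group. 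Therefore $\left[\mathfrak{v},\mathfrak{v}\right]=\mathfrak{z}$, i.e.\ $G$ is a step-two Carnot group.

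An equivalent, slightly slicker route goes through the $j$-mapping. Fix any scalar product on $\mathfrak{g}$ that makes $\mathfrak{v}$ orthogonal to $\mathfrak{z}$ and is non-degenerate on each summand (e.g.\ a positive-definite one), and write $\eta=\langle Z,\cdot\rangle_{\mathfrak{z}}$ with $Z\in\mathfrak{z}\setminus\{0\}$. Then $\eta\left(\left[V,V^{\prime}\right]\right)=\langle j(Z)V,V^{\prime}\rangle_{\mathfrak{v}}$, so the M\'{e}tivier condition says exactly that $\left(V,V^{\prime}\right)\mapsto\langle j(Z)V,V^{\prime}\rangle_{\mathfrak{v}}$ is non-degenerate for every $Z\neq0$; since $\langle\cdot,\cdot\rangle_{\mathfrak{v}}$ is non-degenerate, this is the same as $j(Z)\colon\mathfrak{v}\to\mathfrak{v}$ being invertible for every $Z\neq0$. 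In particular $j(Z)$ is surjective for every $Z\in\mathfrak{z}\setminus\{0\}$, which by the criterion recorded above for step-two Carnot groups is precisely what is required. I do not expect a genuine obstacle here: the only points that deserve care are the trivial abelian case $\mathfrak{v}=\{0\}$ and, for the second argument, the remark that the M\'{e}tivier condition is a statement about the intrinsic bracket, hence independent of the auxiliary scalar product used to introduce the $j$-mapping.
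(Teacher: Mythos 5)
Your first argument is exactly the paper's proof: assume $\left[\mathfrak{v},\mathfrak{v}\right]\subsetneqq\mathfrak{z}$, pick a nonzero $\eta\in\mathfrak{z}^{\ast}$ annihilating $\left[\mathfrak{v},\mathfrak{v}\right]$, and observe that the resulting bilinear form is identically zero, contradicting the M\'etivier condition. The second route via surjectivity of $j(Z)$ and the remark about the abelian edge case are correct but supplementary; the core argument coincides with the paper's.
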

\begin{proof}
It suffices to prove the proposition at the level of Lie algebras. 
Let $\mathfrak{g}$ be a Lie algebra whose corresponding Lie group is a M\'{e}tivier group. 
Then, we have the direct sum decomposition $\mathfrak{g}=\mathfrak{v}\dot{+}\mathfrak{z}$, where $\mathfrak{z}$ is the centre of $\mathfrak{g}$. 
By definition, the bilinear form 
\[
\eta\left(\left[V, V^{\prime}\right]\right), \qquad V, V^{\prime}\in \mathfrak{v}, 
\]
is non-degenerate for all $\eta\in \mathfrak{z}^{\ast}\setminus \{0\}$. 
If $\mathfrak{g}$ would not be Carnot, we would have $\left[\mathfrak{v}, \mathfrak{v}\right]\subsetneqq \mathfrak{z}$, namely $\exists Z\in \mathfrak{z}$ such that $Z\not\in \left[\mathfrak{v}, \mathfrak{v}\right]$. 
Then, there would be a linear functional $\eta\in \mathfrak{z}^{\ast}$ such that $\eta\left(\left[\mathfrak{v}, \mathfrak{v}\right]\right)=0$ and $\eta(Z)\neq 0$. 
In this case, the bilinear form $\eta\left(\left[\cdot, \cdot\right]\right): \mathfrak{v}\times \mathfrak{v}\rightarrow \mathbb{R}$ would clearly be zero and hence degenerate. 
This is a contradiction. 
\end{proof}

\begin{example}
We consider the direct product $\mathbb{H}_3\times \mathbb{H}_3$ of two copies of the three-dimensional Heisenberg Lie group $\mathbb{H}_3$ as an example of Carnot groups which are not M\'{e}tivier groups. 
To see this, we consider the generators $X, Y, Z$ of the three-dimensional Heisenberg Lie algebra $\mathfrak{h}_3$ which satisfy the commutation relations $\left[X, Y\right]=Z$, $\left[Y, Z\right]=0$, $\left[Z, X\right]=0$. 
Clearly $\mathrm{span}_{\mathbb{R}}\left\{Z\right\}$ is the centre of $\mathfrak{h}_3$. 

Next, the direct sum Lie algebra $\mathfrak{h}_3\oplus \mathfrak{h}_3$ is generated by $X_i, Y_i, Z_i$ satisfying $\left[X_i, Y_j\right]=\delta_{ij}Z_i$, $\left[Y_i, Z_j\right]=0$, $\left[Z_i, X_j\right]=0$ for $i, j=1,2$. 
The centre of the Lie algebra $\mathfrak{h}_3\oplus \mathfrak{h}_3$ is given as $\mathfrak{z}:=\mathrm{span}_{\mathbb{R}}\left\{Z_1, Z_2\right\}$ and we take its complement $\mathfrak{v}:=\mathrm{span}_{\mathbb{R}}\left\{X_1, X_2, Y_1, Y_2\right\}$. 
By the commutation relations, we clearly have $\left[\mathfrak{v}, \mathfrak{v}\right]=\mathfrak{z}$ and hence $\mathfrak{h}_3\oplus \mathfrak{h}_3$ is a step-two Carnot algebra. 

On the other hand,  $\mathfrak{h}_3\oplus \mathfrak{h}_3$ is not a M\'{e}tivier algebra. 
In fact, we take $\eta\in\mathfrak{z}^{\ast}$ defined through $\eta(Z_1)=1$, $\eta(Z_2)=0$ and then the skew-symmetric bilinear form $\eta\left(\left[\cdot, \cdot\right]\right): \mathfrak{v}\times \mathfrak{v}\rightarrow \mathbb{R}$ is degenerate, since $\eta\left([X_2, X_1]\right)=\eta\left([X_2, X_2]\right)=\eta\left([X_2, Y_1]\right)=\eta\left([X_2, Y_2]\right)=0$. 
\hfill $\blacklozenge$
\end{example}

Note that a step-two nilpotent Lie algebra $\mathfrak{g}$ equipped with a scalar product $\left\langle\cdot, \cdot \right\rangle$ with the orthogonal direct sum decomposition $\mathfrak{g}=\mathfrak{v}\dot{+}\mathfrak{z}$, where $\mathfrak{z}$ is the centre, the Lie algebra $\mathfrak{g}$ is M\'{e}tivier if and only if the scalar product is positive-definite and $j(Z):\mathfrak{v}\rightarrow \mathfrak{v}$ is surjective for all $Z\in \mathfrak{z}\setminus\left\{ 0 \right\}$. 

\paragraph{$H$-type and pseudo-$H$-type Lie groups}
We here consider the $H$-type Lie groups introduced by Kaplan \cite{kaplan_1980}, as well as their extension, pseudo-$H$-type Lie groups, considered by Furutani and Markina \cite{furutani_markina_2014,furutani_markina_2017,furutani_markina_2019}. 

We take a step-two nilpotent Lie group $G$ whose Lie algebra is denoted by $\mathfrak{g}$ which is supposed to be equipped with a scalar product $\left\langle \cdot, \cdot\right\rangle: \mathfrak{g}\times \mathfrak{g}\rightarrow \mathbb{R}$. 
As in the previous section, we further assume that the restriction $\left\langle \cdot, \cdot \right\rangle_{\mathfrak{z}}:=\left\langle \cdot, \cdot \right\rangle|_{\mathfrak{z}\times \mathfrak{z}}$ is non-degenerate. 
In this case, the orthogonal complement $\mathfrak{v}$ also has the induced scalar product $\left\langle \cdot, \cdot \right\rangle_{\mathfrak{v}}:=\left\langle \cdot, \cdot \right\rangle|_{\mathfrak{v}\times \mathfrak{v}}$. 
(cf. \cite[Lemma 22, p.49]{oneill_1983}.) 
We consider the mapping $j(Z): \mathfrak{v}\rightarrow \mathfrak{v}$ for all $Z\in\mathfrak{z}$ defined through $\left\langle j(Z)V, V^{\prime}\right\rangle_{\mathfrak{v}}=\left\langle Z, \left[V, V^{\prime}\right]\right\rangle_{\mathfrak{z}}$ where $V, V^{\prime}\in\mathfrak{v}$. 
The Lie group is called a \textit{pseudo-$H$-type Lie group}, if $j(Z)$ satisfies 
\begin{equation}\label{relation_pseudo-h-type}
\left\langle j(Z)V, j(Z)V^{\prime}\right\rangle_{\mathfrak{v}}=\left\langle Z, Z\right\rangle_{\mathfrak{z}}\cdot \left\langle V, V^{\prime}\right\rangle_{\mathfrak{v}}, \qquad V, V^{\prime}\in \mathfrak{v}, \, Z\in\mathfrak{z}. 
\end{equation}
In the case where the scalar product $\left\langle\cdot, \cdot\right\rangle$ is positive-definite and hence it is an inner product, the restriction $\left\langle\cdot, \cdot\right\rangle_{\mathfrak{z}}$ is an inner product and automatically non-degenerate. 
In this case, the step-two nilpotent Lie group for which \eqref{relation_pseudo-h-type} is satisfied is called an \textit{$H$-type Lie group}.  

Note that an $H$-type Lie group is automatically a M\'{e}tivier group. 
On the other hand, a M\'{e}tivier group is not necessarily an $H$-type Lie group. 

\begin{example}
Following \cite[p.176 Remark 3.7.5]{bonfiglioli_lanconelli_uguzzoni_2007}, we consider the Lie algebra $\mathfrak{g}\cong\mathbb{R}^5$ generated by $X_1, X_2, Y_1, Y_2, Z$ with the commutation relations $\left[X_1, Y_1\right]=Z$, $\left[X_2, Y_2\right]=2Z$, $\left[X_1, X_2\right]=\left[X_1, Y_2\right]=\left[X_2, Y_1\right]=\left[Y_1, Y_2\right]=\left[Z, X_1\right]=\left[Z, X_2\right]=\left[Z, Y_1\right]=\left[Z, Y_2\right]=0$. 
We consider the inner product on $\mathfrak{g}$ for which $X_1, X_2, Y_1, Y_2, Z$ are orthonormal. 
Clearly, the centre is given as $\mathfrak{z}=\mathrm{span}_{\mathbb{R}}\left\{Z\right\}$ and $\mathfrak{v}=\mathrm{span}_{\mathbb{R}}\left\{X_1, X_2, X_3, X_4\right\}$ is its orthogonal complement. 
Then, with respect to the basis $X_1, Y_1, X_2, Y_2$, the ``$j$-mapping'' is given as 
\[
j(Z)
=
\begin{pmatrix}
0 & -1 & 0 & 0 \\
1 & 0 & 0 & 0 \\
0 & 0 & 0 & -2 \\
0 & 0 & 2 & 0 
\end{pmatrix}. 
\]
The Lie algebra $\mathfrak{g}$ is M\'{e}tivier, but it is not $H$-type. 
\hfill $\blacklozenge$
\end{example}

It should be pointed out that a pseudo-$H$-type Lie group is  a M\'{e}tivier group if and only if $\left\langle\cdot,\cdot\right\rangle_{\mathfrak{z}}$ is definite. 
In fact, if $\left\langle\cdot,\cdot\right\rangle_{\mathfrak{z}}$ is indefinite, there is a null vector $Z\in \mathfrak{z}\setminus \{0\}$ which satisfies $\left\langle Z, Z\right\rangle_{\mathfrak{z}}=0$ and hence $j(Z)=0$. 

\begin{example}\label{ex_pht}
We further introduce an example of the pseudo-$H$-type Lie groups. See e.g. \cite{autenried-furutani-markina-vasilev_2018} for other examples. 
We consider a Lie algebra $\mathfrak{g}\cong\RR^{6}$ generated by $X_{1}, X_{2},X_{3},X_{4},Z_{1},Z_{2}$ with commutation relations $\left[X_{1},X_{2}\right]=\left[X_{3},X_{4}\right]=Z_{1}$, $\left[X_{1},X_{3}\right]=\left[X_{2},X_{4}\right]=Z_{2}$, $\left[X_{1},X_{4}\right]=\left[X_{2},X_{3}\right]=\left[X_{i},Z_{j}\right]=0$, where $i=1,2,3,4$ and $j=1,2$. 
Then, $\mathfrak{z}=\mathrm{span}_{\mathbb{R}}\left\{Z_{1},Z_{2}\right\}$ is the centre of $\mathfrak{g}$. 
We consider the scalar product $\left\langle \cdot, \cdot\right\rangle$ on $\mathfrak{g}$ for which $X_{1}, X_{2},X_{3},X_{4},Z_{1},Z_{2}$ are orthogonal and 
\begin{equation*}
\langle X_{i},X_{j}\rangle_{\mathfrak{v}}
=
\begin{cases}
\,1 &\text{if $i=j$ and $i=1,2$},
\\
-1 &\text{if $i=j$ and $i=3,4$},
\\
\,0 &\text{if $i\neq j$},
\end{cases}
\hspace{20pt}
\langle Z_{i},Z_{j}\rangle_{\mathfrak{z}}
=
\begin{cases}
\,1 &\text{if $i=j=1$},
\\
-1 &\text{if $i=j=2$},
\\
\,0 &\text{if $i\neq j$}.
\end{cases}
\end{equation*}
Then, the orthogonal complement $\mathfrak{v}$ to $\mathfrak{z}$ is given as $\mathfrak{v}=\mathrm{span}_{\mathbb{R}}\left\{X_1, X_2, X_3, X_4\right\}$ and the restrictions $\langle\cdot,\cdot\rangle_{\mathfrak{v}}$ and $\langle\cdot,\cdot\rangle_{\mathfrak{z}}$ of the scalar product $\langle\cdot,\cdot\rangle$ to $\mathfrak{v}$ and $\mathfrak{z}$ are non-degenerate. 
We also see that, with respect to the basis $X_{1},X_{2},X_{3},X_{4}$ of $\mathfrak{v}$, the operators $j\left(Z_{1}\right)$, $j\left(Z_{2}\right)$ are written as
\begin{align*}
j\left(Z_{1}\right)
=
\begin{pmatrix}
0 & -1 & 0 & 0 \\
1 & 0 & 0 & 0 \\
0 & 0 & 0 & 1 \\
0 & 0 & -1 & 0 
\end{pmatrix},
\quad
j\left(Z_{2}\right)
=
\begin{pmatrix}
0 & 0 & 1 & 0 \\
0 & 0 & 0 & 1 \\
1 & 0 & 0 & 0 \\
0 & 1 & 0 & 0 
\end{pmatrix}.
\end{align*}
As is easily checked, the operators $j\left(Z_{1}\right)$, $j\left(Z_{2}\right)$ satisfy the conditions $\left(j\left(Z_{1}\right)\right)^{2}=-\langle Z_{1},Z_{1}\rangle_{\mathfrak{z}}\mathrm{id}_{\mathfrak{v}}$, $\left(j\left(Z_{2}\right)\right)^{2}=-\langle Z_{2},Z_{2}\rangle_{\mathfrak{z}}\mathrm{id}_{\mathfrak{v}}$, $j\left(Z_{1}\right)j\left(Z_{2}\right)=-j\left(Z_{2}\right)j\left(Z_{1}\right)$, and thus the Lie algebra $\mathfrak{g}$ is pseudo-$H$-type. 

\hfill $\blacklozenge$
\end{example}

\subsubsection{Heisenberg-Reiter groups}\label{subsec_HR}
We consider the so-called Heisenberg-Reiter Lie groups, cf. \cite{reiter_1974,torres_lopera_1985,torres_lopera_1988,butler_2003}. 

Consider a step-two nilpotent Lie group whose Lie algebra $\mathfrak{g}$ is given as the vector space direct sum of three subspaces $\mathfrak{u}_1$, $\mathfrak{u}_2$, $\mathfrak{w}$: $\mathfrak{g}=\mathfrak{u}_1\dot{+}\mathfrak{u}_2\dot{+}\mathfrak{w}$. 
We assume that there is a bilinear mapping $B: \mathfrak{u}_1\times \mathfrak{u}_2\rightarrow \mathfrak{w}$, which induces a Lie bracket on $\mathfrak{g}$ through 
\[
\left[\left(U_1, U_2, W\right), \left(U_1^{\prime}, U_2^{\prime}, W^{\prime}\right)\right]
=
\left(0, 0, B\left(U_1, U_2^{\prime}\right)-B\left(U_2,U_1^{\prime}\right)\right), 
\]
where $\left(U_1, U_2, W\right), \left(U_1^{\prime}, U_2^{\prime}, W^{\prime}\right)\in \mathfrak{u}_1\dot{+}\mathfrak{u}_2\dot{+}\mathfrak{w}$. 
We set $\mathfrak{u}_1^{\circ}:=\left\{U_1\in \mathfrak{u}_1\mid \forall U_2\in \mathfrak{u}_2,\; B\left(U_1, U_2\right)=0\right\}$ and $\mathfrak{u}_2^{\circ}:=\left\{U_2\in \mathfrak{u}_2\mid \forall U_1\in \mathfrak{u}_1,\; B\left(U_1, U_2\right)=0\right\}$. 
Then, the centre $\mathfrak{z}$ of $\mathfrak{g}$ is written as $\mathfrak{z}=\mathfrak{u}_1^{\circ}\dot{+}\mathfrak{u}_2^{\circ}\dot{+}\mathfrak{w}$.

Now, we consider a scalar product $\left\langle\cdot, \cdot \right\rangle$ on the Lie algebra $\mathfrak{g}$ and  assume that its restrictions $\left\langle \cdot, \cdot \right\rangle_{\mathfrak{u}_i}:=\left\langle \cdot, \cdot \right\rangle|_{\mathfrak{u}_i\times\mathfrak{u}_i}$ and $\left\langle \cdot, \cdot \right\rangle_{\mathfrak{w}}:=\left\langle \cdot, \cdot \right\rangle|_{\mathfrak{w}\times\mathfrak{w}}$ respectively to $\mathfrak{u}_i$, $i=1,2$, and $\mathfrak{w}$ are non-degenerate. 
We also assume that the subspaces $\mathfrak{u}_1$, $\mathfrak{u}_2$, $\mathfrak{w}$ are pairwise orthogonal to each other with respect to the scalar product $\left\langle\cdot, \cdot \right\rangle$. 
Then, the complementary subspace $\mathfrak{v}:=\left\{X\in \mathfrak{g}\mid \forall Z\in \mathfrak{z}\, \left\langle X, Z\right\rangle=0\right\}$ to $\mathfrak{z}$ is contained in $\mathfrak{u}_1\dot{+}\mathfrak{u}_2$ and we set $\mathfrak{v}_i:=\mathfrak{v}\cap\mathfrak{u}_i$, $i=1,2$. 
Clearly, we have $\mathfrak{v}=\mathfrak{v}_1\dot{+}\mathfrak{v}_2$, $\mathfrak{u}_i=\mathfrak{v}_i\dot{+}\mathfrak{u}_i^{\circ}$, $i=1,2$. 
The restriction $\left\langle \cdot, \cdot \right\rangle_{\mathfrak{v}}:=\left\langle \cdot, \cdot \right\rangle|_{\mathfrak{v}\times\mathfrak{v}}$ is a non-degenerate scalar product, but we also assume that the subspaces $\mathfrak{v}_i$ have the restricted non-degenerate scalar product $\left\langle \cdot, \cdot \right\rangle_{\mathfrak{v}_i}:=\left\langle \cdot, \cdot \right\rangle|_{\mathfrak{v}_i\times\mathfrak{v}_i}$, $i=1,2$. 
Note that, in this case, the restriction $\left\langle\cdot, \cdot \right\rangle_{\mathfrak{z}}$ of the scalar product $\left\langle\cdot, \cdot \right\rangle$ to $\mathfrak{z}$ is also non-degenerate. 

As in the previous section, we consider the {\it $j$-mapping} $j(Z): \mathfrak{v}\rightarrow \mathfrak{v}$ for all $Z\in\mathfrak{z}$ defined through $\left\langle j(Z)V, V^{\prime}\right\rangle_{\mathfrak{v}}=\left\langle Z, \left[V, V^{\prime}\right]\right\rangle_{\mathfrak{z}}$ where $V, V^{\prime}\in\mathfrak{v}$. 
For $W\in \mathfrak{w}$, we define $j_B^{12}\left(W\right): \mathfrak{v}_2\rightarrow \mathfrak{v}_1$ and $j_B^{21}\left(W\right): \mathfrak{v}_1\rightarrow \mathfrak{v}_2$ through 
\begin{align*}
\left\langle j_B^{12}\left(W\right)V_2, V_1\right\rangle_{\mathfrak{v}_1}=\left\langle j_B^{21}\left(W\right)V_1, V_2\right\rangle_{\mathfrak{v}_2}=\left\langle W, B\left(V_1, V_2\right)\right\rangle_{\mathfrak{w}},  
\end{align*}
where $V_1\in \mathfrak{v}_1$, $V_2\in \mathfrak{v}_2$. 
Then, the mapping $j(Z): \mathfrak{v}\rightarrow \mathfrak{v}$ can be given in terms of the direct sum decomposition $\mathfrak{v}=\mathfrak{v}_1\dot{+}\mathfrak{v}_2$ as 
\begin{align}
  \label{j-mapping for Heisenberg-Reiter}
\begin{pmatrix}
0 & -j_B^{12}(Z_0) \\
j_B^{21}(Z_0) & 0
\end{pmatrix},
\end{align}
where $Z\in \mathfrak{z}=\mathfrak{u}_1^{\circ}\dot{+}\mathfrak{u}_2^{\circ}\dot{+}\mathfrak{w}$ and $Z=\overline{Z}+Z_0$, with $\overline{Z}\in \mathfrak{u}_1^{\circ}\dot{+}\mathfrak{u}_2^{\circ}$, $Z_0\in\mathfrak{w}$. 

\subsubsection{Step-two nilpotent Lie groups associated to semi-simple modules}\label{Step-two Lie groups associated to semi-simple modules}
Here, we think of the step-two nilpotent Lie groups associated to representations of semi-simple Lie groups as considered by P. Eberlein \cite{eberlein_2008}. 
Let $\mathfrak{z}$ be a semi-simple Lie algebra and $j:\mathfrak{z}\rightarrow \mathrm{End}\left(\mathfrak{v}\right)$ a real representation of the Lie algebra $\mathfrak{z}$ on a vector space $\mathfrak{v}$. 
We suppose that the direct sum $\mathfrak{g}=\mathfrak{v}\dot{+}\mathfrak{z}$ is equipped with an inner product $\left\langle \cdot ,\cdot\right\rangle: \mathfrak{g}\times \mathfrak{g}\rightarrow \mathbb{R}$ for which the followings are satisfied:

\begin{enumerate}[{(}E$\,1${)}]
\item\label{cond_e1} The vector subspaces $\mathfrak{v}$ and $\mathfrak{z}$ are orthogonal to each other with respect to the inner product $\left\langle \cdot ,\cdot\right\rangle$. 
\item\label{cond_e2} For all $Z\in \mathfrak{z}$, we have $\left\langle j(Z)V, V^{\prime}\right\rangle+\left\langle V, j(Z)V^{\prime}\right\rangle=0$, where $V, V^{\prime}\in \mathfrak{v}$. 
\item\label{cond_e3} For all $Z_1, Z_2, Z_3\in\mathfrak{z}$, we have $\left\langle \mathrm{ad}_{Z_1}Z_2, Z_3\right\rangle+\left\langle Z_2, \mathrm{ad}_{Z_1}Z_3\right\rangle=0$, where $\mathrm{ad}_{Z_1}:\mathfrak{z}\rightarrow \mathfrak{z}$ denotes the adjoint action on $\mathfrak{z}$. 
\end{enumerate}

Now, we consider the Lie bracket $\left[\cdot, \cdot\right]: \mathfrak{g}\times \mathfrak{g}\rightarrow \mathfrak{g}$ defined through 
\[
\left\langle \left[V+Z, V^{\prime}+Z^{\prime}\right], Z^{\prime\prime}\right\rangle
=
\left\langle j\left(Z^{\prime\prime}\right), \left[V, V^{\prime}\right]_{\mathfrak{z}}\right\rangle, 
\]
where $V, V^{\prime}\in\mathfrak{v}$, $Z, Z^{\prime}\in \mathfrak{z}$. 
Here, $\left[\cdot, \cdot\right]_{\mathfrak{z}}:\mathfrak{z}\times \mathfrak{z}\rightarrow \mathfrak{z}$ is the Lie bracket on $\mathfrak{z}$. 
Following \cite{eberlein_2008}, the Lie algebra $\left(\mathfrak{g}, \left[\cdot, \cdot\right]\right)$ is called a \textit{real step-two nilpotent Lie algebra arising from the semi-simple module} $\left(\mathfrak{v}, j\right)$. 
Note that $\mathfrak{z}$ is not a Lie subalgebra of $\mathfrak{g}$ with respect to the Lie bracket $\left[\cdot, \cdot\right]: \mathfrak{g}\times \mathfrak{g}\rightarrow \mathfrak{g}$. 
Clearly, the $j$-mapping for the Lie algebra $\left(\mathfrak{g}, \left[\cdot, \cdot\right]\right)$ is nothing but the representation $j: \mathfrak{z}\rightarrow \mathrm{End}\left(\mathfrak{v}\right)$. 

\section{Equilibrium points and stability}\label{Equilibrium points and stability}
\subsection{Williamson types and stability of equilibrium points} 
In this subsection, we introduce the Williamson type of an equilibrium point for a Hamiltonian system $\left( N,\omega,H \right)$ on a $2n$-dimensional symplectic manifold. 
To analyse the stability of equilibrium points for the equation \eqref{eq_geodesic_flow} in the subsequent subsections, we further describe the relation between the Williamson types of equilibrium points and their Lyapunov stability. 
See e.g. \cite{williamson_1936}, \cite{ratiu_tarama_2015}, \cite{ratiu_tarama_2020}, \cite[Appendix $6$]{arnold_1989}, \cite{zung_1996} for the related facts on these materials.

\paragraph{Williamson types}
We recall the notion of Williamson type of an equilibrium point for a linear Hamiltonian system on a $2n$-dimensional symplectic vector space $\displaystyle \left( \RR^{2n}, \sum_{i=1}^n\dd q_i\wedge\dd p_i \right)$. 
For the Hamiltonian $H\coloneq\frac{1}{2}z^{\mathrm{T}}Sz$, $z=(q_1,\dots,q_n,p_1,\dots,p_n)^{\mathrm{T}}\in \RR^{2n}$, where $S$ is a $2n\times 2n$ constant symmetric matrix, the Hamilton equation is given by the linear differential equation 
\begin{align}
  \frac{\dd z}{\dd t}=\mathsf{J}^{\mathrm{T}}S z,\label{lin_Ham_eq}
\end{align}
where $z\in\RR^{2n}$ and $\mathsf{J}=\begin{pmatrix} 0 & -\mathsf{E}_n \\ \mathsf{E}_n & 0 \end{pmatrix}$ is the symplectic matrix with $\mathsf{E}_n$ being the $n\times n$ unit matrix. 

For the linear Hamiltonian system \eqref{lin_Ham_eq}, the origin $\bm{0}\in \mathbb{R}^{2n}$ is an equilibrium point. 
The eigenvalues of the Hamiltonian matrix $A:=\mathsf{J}^{\mathrm{T}}S$ in the equation \eqref{lin_Ham_eq}  are among the following four kinds of complex numbers: 
\begin{align*}
  \arraycolsep=15pt
  \begin{array}{rrrr}
    \left( 1 \right)\; \pm\mu_i\sqrt{-1}, & \left( 2 \right)\; \pm\mu_r, & \left( 3 \right)\; \pm\mu_r\pm\mu_i\sqrt{-1}, & \left( 4 \right)\; 0,
  \end{array}
\end{align*}
where $\mu_r,\mu_i\in\RR\setminus\{0\}$. 
Note that the types $\left( 1 \right)$, $\left( 2 \right)$, and $\left( 4 \right)$ are given as pairs, whereas the type $\left( 3 \right)$ is a quadruple. 
The three types $\left( 1 \right)$, $\left( 2 \right)$, and $ \left( 3 \right)$ of the eigenvalues correspond to the \textit{elliptic}, \textit{hyperbolic}, and \textit{focus-focus} behaviour of the dynamics around the equilibrium point $\bm{0}$, respectively. 
We denote the number of the four types $(1), (2), (3), (4)$ of eigenvalues by $2k_e,\,2k_h,\,4k_f,\,2k_0$, respectively. 
We clearly have $k_e+k_h+2k_f+k_0=n$. 
If the eigenvalues of $A$ are all non-zero, the equilibrium point $\bm{0}$ is isolated and the triple $\left( k_e,k_h,k_f \right)$ is called the \textit{Williamson type} of the equilibrium point $\bm{0}$. 
In this case, we have $k_e+k_h+2k_f=n$. 
Below, we only focus on those isolated equilibrium points.  

\medskip

In general, the Williamson type of an equilibrium point is defined  for a Hamiltonian system $\left( N,\omega,H \right)$ on a $2n$-dimensional symplectic manifold as follows: \\
Let $x\in N$ be an equilibrium point of this Hamiltonian system and we linearize the Hamilton equation around $x$ as 
\begin{align}\label{linearized_eq}
  \frac{\dd P}{\dd t}=\omega\left( x \right)^{-1}\mathrm{Hess}_{x}\left( H \right)P,
\end{align}
where $P\in T_x N$. Here, $\omega(x)$ is the symplectic form at $x$ and $\mathrm{Hess}_x\left( H \right)$ is the Hessian of the Hamiltonian $H$ at $x$, both of which are regarded as the linear operators of $T_x N$ to $T^{\ast}_x N$. 
Note that the differential equation \eqref{linearized_eq} is also a linear Hamilton equation. 
See \cite[Subsection $4.1$]{ratiu_tarama_2020} for more details. 
With respect to a Darboux coordinate system of $T_x N$, the symplectic form $\omega\left( x \right)$ is written as the matrix $\mathsf{J}=\begin{pmatrix} 0 &-\mathsf{E}_n \\ \mathsf{E}_n & 0 \end{pmatrix}$. 
Then, the matrix representation of the linear operator $\omega\left( x \right)^{-1}\mathrm{Hess}_{x}\left( H \right)$ is a Hamiltonian matrix, as considered in the case of the linear Hamilton equation \eqref{lin_Ham_eq}. 
The Williamson type of the isolated equilibrium point $x$ of the Hamiltonian system $\left( N,\omega,H \right)$ is defined as the one of the linearized Hamilton equation \eqref{linearized_eq} or equivalently \eqref{lin_Ham_eq} with $A=\mathsf{J}^{-1}S=\omega\left( x \right)^{-1}\mathrm{Hess}_{x}\left( H \right)$. 

\paragraph{Relation to instability}
As is well-known in the theory of ordinary differential equations, an equilibrium point for a system of differential equations is asymptotically stable if all the eigenvalues of the linearization matrix at the equilibrium point have negative real parts. 
On the other hand, it is unstable if at least one of the eigenvalues of the linearization matrix has a positive real part. 
See e.g. \cite[Chapter $5$]{pontryagin_1962} and \cite[Chapter $9$]{hirsch_smale_1974} for the details about these basic theorems. 
Concerning a Hamiltonian system, an equilibrium point whose Williamson type $(k_e, k_h, k_f)$ with $k_h\neq 0$ or $k_f\neq 0$ is obviously unstable, since at least one of the eigenvalues of the linearization matrix has a positive real part. 
\begin{proposition}
Let $x\in N$ be an isolated equilibrium point of a Hamiltonian system $\left( N,\omega, H \right)$ with the Williamson type $\left( k_e,k_h,k_f \right)$.
If either $k_h$ or $k_f$ is non-zero, the equilibrium point $x$ is unstable.
\hfill $\square$
\end{proposition}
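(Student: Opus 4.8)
The plan is to reduce the statement to the standard instability criterion for linear systems of ODEs, which is quoted in the paragraph immediately preceding the proposition. Concretely, let $x\in N$ be an isolated equilibrium of the Hamiltonian system $\left(N,\omega,H\right)$ with Williamson type $\left(k_e,k_h,k_f\right)$, and consider the linearized Hamilton equation \eqref{linearized_eq}, whose coefficient matrix is the Hamiltonian matrix $A=\omega\left(x\right)^{-1}\mathrm{Hess}_{x}\left(H\right)$. By the definition of the Williamson type, the spectrum of $A$ consists of eigenvalues of types $(1)$ through $(3)$ with multiplicities $2k_e,\,2k_h,\,4k_f$ (no zero eigenvalues since $x$ is isolated). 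If $k_h\neq 0$, then $A$ has an eigenvalue of the form $+\mu_r$ with $\mu_r\in\RR\setminus\{0\}$; one of $\pm\mu_r$ is strictly positive, so $A$ has an eigenvalue with strictly positive real part. If $k_f\neq 0$, then $A$ has an eigenvalue $\mu_r+\mu_i\sqrt{-1}$ with $\mu_r,\mu_i\in\RR\setminus\{0\}$, and again one of the four numbers $\pm\mu_r\pm\mu_i\sqrt{-1}$ has strictly positive real part. So in either case $A$ has at least one eigenvalue with positive real part.

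Next I would invoke the classical theorem on instability by linearization: if the linearization of a (smooth) vector field at an equilibrium has an eigenvalue with positive real part, then the equilibrium is (Lyapunov) unstable; this is exactly the result quoted from \cite[Chapter $5$]{pontryagin_1962} and \cite[Chapter $9$]{hirsch_smale_1974} just before the proposition. Applying it to the Hamiltonian vector field of $H$ on $N$ at the point $x$, whose linearization is $A$, we conclude that $x$ is unstable. This completes the argument.

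The only genuinely delicate point — and the one I would flag — is that the instability-by-linearization theorem is usually stated for vector fields on $\RR^{m}$ or in a chart, so one should note that instability is a local property invariant under smooth coordinate changes, hence it suffices to work in any chart around $x$; on a symplectic manifold one may simply take a Darboux chart, in which (as the excerpt already observes) $\omega\left(x\right)$ becomes $\mathsf{J}$ and the linearized equation becomes \eqref{lin_Ham_eq} with $A=\mathsf{J}^{-1}S=\mathsf{J}^{\mathrm{T}}S$. Apart from this routine localization remark, the proof is essentially immediate from the definitions and the cited ODE theorem, so I would keep it to a few lines.

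Thus the proof is: pass to a Darboux chart at $x$; read off from the Williamson type that, when $k_h\neq 0$ or $k_f\neq 0$, the Hamiltonian matrix $A=\mathsf{J}^{\mathrm{T}}S$ in the linearized equation \eqref{lin_Ham_eq} has an eigenvalue with strictly positive real part (type $(2)$ or type $(3)$ respectively); and conclude instability of $x$ by the standard instability theorem for linearizations of flows. $\square$
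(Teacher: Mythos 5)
Your argument is exactly the one the paper gives: the preceding paragraph of the paper reads off from the Williamson type that $k_h\neq 0$ or $k_f\neq 0$ forces the linearization matrix $A=\omega\left(x\right)^{-1}\mathrm{Hess}_{x}\left(H\right)$ to have an eigenvalue with strictly positive real part (since the type $(2)$ pairs and type $(3)$ quadruples are symmetric about the imaginary axis), and then invokes the classical instability-by-linearization theorem cited from \cite[Chapter $5$]{pontryagin_1962} and \cite[Chapter $9$]{hirsch_smale_1974}. Your additional remark on passing to a Darboux chart is a harmless (and correct) elaboration of a point the paper leaves implicit; the proof is correct and essentially identical to the paper's.
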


\paragraph{Relation to Lyapunov stability of linear systems}
As for linear Hamiltonian system, if the coefficient matrix $\mathsf{J}^{\mathrm{T}}S$ of the Hamilton equation \eqref{lin_Ham_eq} has only purely imaginary eigenvalues (particularly if $k_h=k_f=0$ when the equilibrium point has the Williamson type $(k_e,k_h,k_f)$) and if the Hamiltonian matrix $\mathsf{J}^{\mathrm{T}}S$ is simple (diagonalizable), the associated Jordan normal form does not have nilpotent parts. 
After a change of coordinates, the solution $z\left( t \right)$ to the Hamilton equation \eqref{lin_Ham_eq} is explicitly written with a block diagonal matrix as
\begin{align*}
  z\left( t \right)=
  \begin{pmatrix}
    R\left( \mu_1 t \right) & & & \\
    & \ddots & & \\
    & & R\left( \mu_{k_e} t \right) & \\
    & & & \mathsf{E}_{2(n-k_e)}
  \end{pmatrix}
  z\left(0\right),
\end{align*}
where $R\left( \theta \right)$ stands for the $2$-dimensional rotation matrix: $R\left( \theta \right)=\begin{pmatrix*}[r] \cos\theta & \sin\theta \\ -\sin\theta & \cos\theta \end{pmatrix*}$, and hence the equilibrium point is Lyapunov stable. 
\begin{proposition}
Suppose that the Williamson type of the equilibrium point $z=\bm{0}$ for the linear Hamiltonian system \eqref{lin_Ham_eq} is given as $\left( k_e,k_h,k_f \right)$. 
The equilibrium point $z=\bm{0}$ is Lyapunov stable, if $k_h=k_f=0$ and the linearization matrix at $z=\bm{0}$ is simple.
\hfill $\square$
\end{proposition}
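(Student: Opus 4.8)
The plan is to make rigorous the computation sketched in the paragraph immediately preceding the statement. Since the Williamson type $(k_e,k_h,k_f)$ is defined only for the \emph{isolated} equilibrium $z=\bm{0}$, we have $k_0=0$ and $k_e+k_h+2k_f=n$, so the hypothesis $k_h=k_f=0$ forces $k_e=n$. Consequently the $2n$ eigenvalues of the Hamiltonian matrix $A:=\mathsf{J}^{\mathrm{T}}S$ are precisely $\pm\mu_1\sqrt{-1},\dots,\pm\mu_n\sqrt{-1}$ with $\mu_1,\dots,\mu_n\in\RR\setminus\{0\}$; they are purely imaginary and non-zero, though not necessarily distinct. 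By assumption $A$ is simple, i.e. diagonalizable over $\CC$.

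First I would put $A$ into a real normal form. Since $A$ is a real matrix, its complex eigenvectors occur in conjugate pairs; pairing an eigenvector $u_j+\sqrt{-1}\,v_j$ (with $u_j,v_j\in\RR^{2n}$) for the eigenvalue $\mu_j\sqrt{-1}$ with its conjugate, and using diagonalizability so that no non-trivial Jordan blocks appear, one obtains a real invertible matrix $P$ with $P^{-1}AP=B$, where $B$ is block-diagonal with the $2\times2$ blocks $\begin{pmatrix}0 & \mu_j\\ -\mu_j & 0\end{pmatrix}$, $j=1,\dots,n$. Exponentiating gives $e^{tA}=Pe^{tB}P^{-1}$, and $e^{tB}$ is block-diagonal with blocks $R(\mu_j t)$, exactly the rotation matrices in the displayed formula above. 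Each $R(\mu_j t)$ is orthogonal, so in the Euclidean operator norm $\|e^{tB}\|=1$ for every $t\in\RR$, whence
\begin{align*}
  \|e^{tA}\|\;\le\;\|P\|\cdot\|e^{tB}\|\cdot\|P^{-1}\|\;=\;\|P\|\cdot\|P^{-1}\|\;=:\;C\qquad(t\in\RR),
\end{align*}
a finite constant independent of $t$.

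Next I would deduce Lyapunov stability. The solution of \eqref{lin_Ham_eq} with $z(0)=z_0$ is $z(t)=e^{tA}z_0$, hence $\|z(t)\|\le C\,\|z_0\|$ for all $t\ge0$. Given $\varepsilon>0$, the choice $\delta:=\varepsilon/C>0$ then guarantees $\|z(t)\|<\varepsilon$ for all $t\ge0$ whenever $\|z_0\|<\delta$, which is exactly the definition of Lyapunov stability of the equilibrium $z=\bm{0}$.

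I do not anticipate a genuine obstacle here: the only step requiring a little care is the passage to the real block form $B$, namely the classical fact that a real matrix which is diagonalizable over $\CC$ and has purely imaginary spectrum is real-similar to a direct sum of infinitesimal rotations; once that is in place, everything reduces to the boundedness of the matrix exponential together with the $\varepsilon$--$\delta$ definition of stability. One could equally well skip the real normal form and diagonalize $A$ directly over $\CC$ as $A=Q\,\mathrm{diag}(\pm\mu_j\sqrt{-1})\,Q^{-1}$ with $Q\in GL(2n,\CC)$; then the diagonal factor of $e^{tA}$ is unitary, the same uniform bound $\|e^{tA}\|\le\|Q\|\cdot\|Q^{-1}\|$ holds, and the conclusion is identical.
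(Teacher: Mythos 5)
Your proposal is correct and follows essentially the same route as the paper: the paper's argument is precisely the paragraph preceding the statement, which observes that simplicity plus a purely imaginary spectrum yields a real block-diagonal normal form built from the rotation blocks $R(\mu_j t)$, whence the solutions stay bounded. You merely make this explicit by supplying the similarity transformation $P$, the uniform bound $\|e^{tA}\|\le\|P\|\cdot\|P^{-1}\|$, and the $\varepsilon$--$\delta$ verification (and you correctly note that, the equilibrium being isolated, $k_0=0$ so the identity block $\mathsf{E}_{2(n-k_e)}$ in the paper's formula is empty).
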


\begin{remark}
In the general framework of dynamical systems theory, an equilibrium point for a system is usually called \textit{elliptic} (respectively \textit{hyperbolic}), if the linearization matrix is diagonalizable and has only purely imaginary eigenvalues (respectively if it has only eigenvalues whose real parts are non-zero). 
See e.g. \cite[Section $6$]{meyer_offin_2017}. 
Concerning Hamiltonian systems, it is common to classify the eigenvalues of the linearization matrix into purely imaginary, real, and complex eigenvalues. 
Because of the finer classification, the terminology ``hyperbolic'' for Hamiltonian systems is unfortunately inconsistent with the one in general dynamical systems. 
The equilibrium point of a Hamiltonian system whose Williamson type $(k_e, k_h, k_f)$ satisfies $k_e=0$ (or equivalently, $k_{h}\neq 0$ or $k_{f}\neq 0$) is hyperbolic in the general framework of dynamical systems theory. 
On the other hand, an equilibrium point of a Hamiltonian system, which is ``hyperbolic'' in view of general dynamical systems theory, is not necessarily those equilibrium points with the Williamson type $(k_e, k_h, k_f)=(0,k_h, 0)$. 
To avoid possible confusions, we do not use the terminology ``hyperbolic equilibrium'' in the present paper. 
Nevertheless, if the linearization matrix is diagonalizable, the equilibrium point of a Hamiltonian system whose Williamson type $(k_e, k_h, k_f)$ satisfies $k_h=k_f=0$ is elliptic in the terminology of general dynamical systems theory and visa versa. 
\end{remark}

\subsection{Classification by the conjugacy classes of Cartan subalgebras in $\mathfrak{so}(p,q)$}\label{subsec_cartan}

In this subsection, we review the classification about the conjugacy classes of Cartan subalgebras in the Lie algebra $\mathfrak{so}(p,q)$, which is used in the analysis of Williamson types for the relative equilibria of the geodesic flows of step-two nilpotent Lie groups. 
Below, we assume the condition $p\leq q$ without loss of generality. 

By the classification of Cartan subalgebras in \cite{sugiura_1959}, which is summarized in Appendix \ref{appendix_a}, the Cartan subalgebras in $\mathfrak{so}(p,q)$ are, with $p+q$ being even, conjugate to one of the sets consisting of the following matrices: 
\begin{equation}\label{Cartan_D_n_standard}
\begin{pmatrix}
D_1+D_2 & 0 & 0 & 0 & 0 & 0 & 0 \\
0 & D_4 & 0 & 0 & -D_3 & 0 & 0 \\
0 & 0 & 0 & 0 & 0 & -D_5 & 0 \\
0 & 0 & 0 & D_1-D_2 & 0 & 0 & 0 \\
0 & -D_3 & 0 & 0 & D_4 & 0 & 0 \\
0 & 0 & -D_5 & 0 & 0 & 0 & 0 \\
0 & 0 & 0 & 0 & 0 & 0 & D_6
\end{pmatrix},
\end{equation}
where 
\begin{align*}
D_1
&=
\begin{pmatrix}
0 & \mathrm{diag}\left(-h_1, \ldots, -h_{\ell}\right) \\
\mathrm{diag}\left(h_1, \ldots, h_{\ell}\right) & 0
\end{pmatrix}, \notag \\
D_2
&=
\begin{pmatrix}
0 & \mathrm{diag}\left(-h_{\ell +1}, \ldots, -h_{2\ell}\right) \\
\mathrm{diag}\left(h_{\ell +1}, \ldots, h_{2\ell}\right) & 0
\end{pmatrix}, \notag \\
D_3
&=
\mathrm{diag}\left(h_{2\ell +1}, \ldots, h_{2\ell +k}, h_{2\ell +1}, \ldots, h_{2\ell +k}\right), \notag \\
D_4
&=
\begin{pmatrix}
0 & \mathrm{diag}\left(-h_{2\ell +k+1}, \ldots, -h_{2\ell+2k}\right) \\
\mathrm{diag}\left(h_{2\ell +k+1}, \ldots, h_{2\ell+2k}\right) & 0
\end{pmatrix}, \notag \\
D_5
&=
\mathrm{diag}\left(h_{2\ell+2k+1}, \ldots, h_p\right), \notag \\
D_6
&=
\begin{pmatrix}
0 & \mathrm{diag}\left(-h_{p +1}, \ldots, -h_{n}\right) \\
\mathrm{diag}\left(h_{p +1}, \ldots, h_{n}\right) & 0
\end{pmatrix}, 
\end{align*}
and  $0\leq k, 0\leq \ell, 2\left(k+\ell\right)\leq p$. Here, we put $p+q=2n$. 
Note that the matrices \eqref{Cartan_D_n_standard} correspond to \eqref{Cartan_D_n} in Appendix \ref{appendix_a}.
There is another conjugacy class, which corresponds to \eqref{Cartan_D_n_bis} in Appendix \ref{appendix_a}, represented by the set of all the following matrices: 
\begin{equation}\label{Cartan_D_n_bis_standard}
\begin{pmatrix}
D_4 & 0 & -D_3 & 0 \\
0 & D_5^{\prime}+D_7 & 0 & 0 \\
-D_3 & 0 & D_4 & 0 \\
0 & 0 & 0 & D_5^{\prime}-D_7
\end{pmatrix},
\end{equation}
where 
\begin{align*}
D_3
&=
\mathrm{diag}\left(h_{1}, \ldots, h_{(n-2)/2}, h_{1}, \ldots, h_{(n-2)/2}\right), \notag \\
D_4
&=
\begin{pmatrix}
0 & \mathrm{diag}\left(-h_{n/2}, \ldots, -h_{n-2}\right) \\
\mathrm{diag}\left(h_{n/2}, \ldots, h_{n-2}\right) & 0
\end{pmatrix}, \notag \\
D_5^{\prime}
&=
\mathrm{diag}\left(h_{n-1}, -h_{n-1}\right), \notag \\
D_7
&=
\begin{pmatrix}
0 & -h_n \\
h_n & 0
\end{pmatrix}. 
\end{align*}

When $p+q$ is odd, the Cartan subalgebras in $\mathfrak{so}(p,q)$ are conjugate to one of the sets consisting of \eqref{Cartan_D_n_standard} with zero vectors being added to the last row and the last column, or those consisting of the following matrices: 

\begin{equation}\label{Cartan_B_n_standard}
\begin{pmatrix}
D_1+D_2 & 0 & 0 & 0 & 0 & 0 & 0 & 0 \\
0 & D_4 & 0 & 0 & -D_3 & 0 & 0 & 0 \\
0 & 0 & 0 & 0 & 0 & -D_5^{\prime\prime} &0 & 0 \\
0 & 0 & 0 & D_1-D_2 & 0 & 0 & 0 & 0 \\
0 & -D_3 & 0 & 0 & D_4  & 0 & 0 & 0 \\
0 & 0 & -D_5^{\prime\prime} & 0 & 0 & 0 & 0 & D_8/\sqrt{2} \\
0 & 0 & 0 & 0 & 0 & 0 & D_6 & 0 \\
0 & 0 & 0 & 0 & 0 & -D_8^{\mathrm{T}}/\sqrt{2} & 0 & 0
\end{pmatrix},
\end{equation}
where $D_1, D_2, D_3, D_4, D_6$ are the same matrices as above and $D_5^{\prime\prime}=\mathrm{diag}\left(0, h_{2\left(\ell +k+1\right)}, \ldots, h_p\right)$, and $D_8=\left(h_{2\ell +2k+1}, 0, \ldots, 0\right)^{\mathrm{T}}\in \mathbb{R}^{p-2\ell-2k}$, and $0\leq k, 0\leq \ell, 2\left(k+\ell\right)+1\leq p$. Here, we put $p+q=2n+1$. Note also that the matrices \eqref{Cartan_B_n_standard} correspond to \eqref{Cartan_B_n} in Appendix \ref{appendix_a}.

\subsection{Williamson types and stability of equilibrium points for the geodesic flows of step-two nilpotent Lie groups.}\label{subsec_wt_and_stb}
In this subsection, we investigate the Williamson types of equilibrium points for the geodesic flows and their stability on the step-two nilpotent Lie groups. 

As in \S \ref{Lie-Poisson equation for step-two nilpotent Lie groups}, the Lie-Poisson equation for the geodesic flow of a step-two nilpotent Lie group with respect to the left-invariant pseudo-Riemannian metric is given as \eqref{eq_geodesic_flow}.
It is easily checked that the equilibrium points of the equation \eqref{eq_geodesic_flow} are the points $Y_{\mathfrak{v}}+Y_{\mathfrak{z}}\in\mathfrak{v}\dot{+}\mathfrak{z}$, where $Y_{\mathfrak{v}}\in \ker\left(j\left(Y_{\mathfrak{z}}\right)\right)$. 
The Lie-Poisson equation \eqref{eq_geodesic_flow} can be restricted to coadjoint orbits, as is explained in \S \ref{subsec_lp} prior to Remark \ref{remark on degenerate case}. 
On a coadjoint orbit $\mathcal{O}$, the centre component $Y_{\mathfrak{z}}$ of a point $Y\in \mathcal{O}\subset \mathfrak{g}$ is constant, as the functions $f(Y_{\mathfrak{z}})$ which only depend on the centre component are Casimir functions with respect to the Lie-Poisson bracket. 
The Lie-Poisson equation \eqref{eq_geodesic_flow} can be restricted to $\mathfrak{v}+Y_{\mathfrak{z}}$, where the restricted system is again described by the first component of \eqref{eq_geodesic_flow} under the identification through the translation by $Y_{\mathfrak{z}}$. 
Moreover, since the coadjoint orbits are linear submanifolds of $\mathfrak{v}$ from Proposition \ref{coadjoint orbit of step-two}, the restriction of the first component of \eqref{eq_geodesic_flow} to each coadjoint orbit is given as
\begin{align}\label{lie-poisson on orbits}
\frac{\dd Y}{\dd t}=\left(j\left( Y_{\mathfrak{z}} \right)|_{\mathcal{O}}\right)Y, \quad Y\in \mathcal{O}. 
\end{align}
In particular, on each generic, i.e. maximal-dimensional, coadjoint orbit, the restricted equation \eqref{lie-poisson on orbits} has only one equilibrium point, which is 
$Y_{\mathfrak{v}}+Y_{\mathfrak{z}}$, where $Y_{\mathfrak{v}}\in \ker\left(j\left(Y_{\mathfrak{z}}\right)\right)$. We can further see that $j\left( Y_{\mathfrak{z}} \right)$ and $j\left( Y_{\mathfrak{z}} \right)|_{\mathcal{O}}$ have the same eigenvalues. 
Thus, the Williamson type of the equilibrium point on each generic coadjoint orbit is determined through the eigenvalues of $j\left( Y_{\mathfrak{z}} \right)$, as we consider in what follows. 

\subsubsection{Williamson types for general step-two nilpotent Lie groups}\label{subsec_wt_gen}
As mentioned in Introduction, there are only less unified approaches to nilpotent Lie groups, compared to semi-simple Lie groups. 
However, for the step-two nilpotent Lie groups, it is possible to determine the Williamson types of the equilibrium points for the Lie-Poisson equation on each generic coadjoint orbit even in the general case.
In view of the results in \S \ref{subsec_lp} and \S \ref{subsec_cartan}, we can generally classify the Williamson types of the equilibrium points for the Lie-Poisson equation, by determining the types of eigenvalues of the matrices \eqref{Cartan_D_n_standard}, \eqref{Cartan_D_n_bis_standard}, \eqref{Cartan_B_n_standard} in a suitable Cartan subalgebra.
Consequently, we have the following theorem. 

\begin{theorem}\label{main_thm_general}
The equilibrium $Y_{\mathfrak{v}}+Y_{\mathfrak{z}}\in \mathcal{O}$, where $Y_{\mathfrak{v}}\in \ker\left(j\left(Y_{\mathfrak{z}}\right)\right)$, for the Lie-Poisson equation \eqref{eq_geodesic_flow}, restricted to the orbit $\mathcal{O}$, has the Williamson type $\left(2\ell-p+n,p-2\ell-2k,k\right)$, $\left(0, 0, n/2\right)$, and $\left(2\ell+1+n-p, p-2\ell-2k-1,k\right)$ if $j\left(Y_{\mathfrak{z}}\right)$ is conjugate to \eqref{Cartan_D_n_standard}, \eqref{Cartan_D_n_bis_standard}, \eqref{Cartan_B_n_standard}, respectively, and if the rank of the matrices \eqref{Cartan_D_n_standard}, \eqref{Cartan_D_n_bis_standard}, \eqref{Cartan_B_n_standard} is equal to $2n$, respectively. 
\hfill $\square$
\end{theorem}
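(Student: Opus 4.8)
The plan is to reduce the statement to a spectral computation for the three explicit matrices \eqref{Cartan_D_n_standard}, \eqref{Cartan_D_n_bis_standard}, \eqref{Cartan_B_n_standard}, and then to carry out that computation by exploiting the block structure of each. First I would record the set-up already isolated above: on a generic coadjoint orbit $\mathcal{O}$ the restricted Lie-Poisson equation is the \emph{linear} equation \eqref{lie-poisson on orbits}, which is simultaneously its own linearisation at the unique equilibrium $Y_{\mathfrak{v}}+Y_{\mathfrak{z}}$ with $Y_{\mathfrak{v}}\in\ker\left(j\left(Y_{\mathfrak{z}}\right)\right)$, and which (the orbit being a linear submanifold by Proposition~\ref{coadjoint orbit of step-two}) is a linear Hamiltonian equation for the orbit symplectic form; hence the Hamiltonian matrix governing the Williamson type is $j\left(Y_{\mathfrak{z}}\right)|_{\mathcal{O}}$. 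As noted just before the theorem, the nonzero spectrum of $j\left(Y_{\mathfrak{z}}\right)|_{\mathcal{O}}$ coincides with that of $j\left(Y_{\mathfrak{z}}\right)$ itself, and the hypothesis that the relevant matrix among \eqref{Cartan_D_n_standard}, \eqref{Cartan_D_n_bis_standard}, \eqref{Cartan_B_n_standard} has rank $2n$ says exactly that $j\left(Y_{\mathfrak{z}}\right)|_{\mathcal{O}}$ is invertible; so the equilibrium is isolated, its Williamson type $\left(k_e,k_h,k_f\right)$ is well defined, and it is read off by counting, among the eigenvalues of $j\left(Y_{\mathfrak{z}}\right)$, how many purely imaginary pairs ($2k_e$ of them), real pairs ($2k_h$), and genuinely complex quadruples ($4k_f$) occur. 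Since conjugate operators have the same spectrum, it remains to spell this out for each of the three normal forms.

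For \eqref{Cartan_D_n_standard} I would split the matrix into the mutually independent blocks one reads off from its sparsity pattern. The real skew-symmetric blocks $D_1+D_2$, $D_1-D_2$, and $D_6$ contribute only purely imaginary pairs $\pm\sqrt{-1}\left(h_j\pm h_{\ell+j}\right)$ and $\pm\sqrt{-1}\,h_m$, hence $\ell+\ell+\left(n-p\right)=2\ell-p+n$ elliptic blocks. The $2\left(p-2\ell-2k\right)$-dimensional block $\begin{pmatrix}0 & -D_5\\ -D_5 & 0\end{pmatrix}$, after a permutation of the basis, splits into $2\times2$ blocks $\begin{pmatrix}0 & -h\\ -h & 0\end{pmatrix}$ with eigenvalues $\pm h$, hence $p-2\ell-2k$ hyperbolic blocks. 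Finally the $4k$-dimensional block $\begin{pmatrix}D_4 & -D_3\\ -D_3 & D_4\end{pmatrix}$, after the corresponding permutation, is a direct sum of $k$ copies of $\begin{pmatrix}J_a & -b\,\mathsf{E}_2\\ -b\,\mathsf{E}_2 & J_a\end{pmatrix}$ with $J_a=\begin{pmatrix}0 & -a\\ a & 0\end{pmatrix}$, $a=h_{2\ell+k+j}$, $b=h_{2\ell+j}$; diagonalising such a $4\times4$ block over $\CC$ --- most cleanly by writing it as $\mathsf{E}_2\otimes J_a-b\begin{pmatrix}0 & 1\\ 1 & 0\end{pmatrix}\otimes\mathsf{E}_2$ with the two summands commuting --- gives the eigenvalues $\pm b\pm\sqrt{-1}\,a$, a focus--focus quadruple, so there are $k$ focus--focus blocks. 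Tallying these three contributions gives the Williamson type $\left(2\ell-p+n,\,p-2\ell-2k,\,k\right)$, the identity $k_e+k_h+2k_f=n$ serving as a built-in check since the orbit is $2n$-dimensional.

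For \eqref{Cartan_B_n_standard} the same three-part analysis applies, the new feature being the interaction of the leading zero entry of $D_5^{\prime\prime}$ with the off-diagonal entries $\pm D_8/\sqrt{2}$ and the one-dimensional last block: these combine into a $3\times3$ skew-symmetric sub-block whose eigenvalues are $0$ and a purely imaginary pair proportional to $h_{2\ell+2k+1}$. Thus the direction indexed by $h_{2\ell+2k+1}$, which in the pattern of \eqref{Cartan_D_n_standard} gave a hyperbolic pair, now gives an elliptic pair together with a single zero eigenvalue; the rank hypothesis forces $\dim\ker\left(j\left(Y_{\mathfrak{z}}\right)\right)=1$, so this zero is precisely the one removed on passing to $\mathcal{O}$. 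Hence $k_h$ drops by one and $k_e$ rises by one relative to \eqref{Cartan_D_n_standard}, giving the Williamson type $\left(2\ell+1+n-p,\,p-2\ell-2k-1,\,k\right)$. For \eqref{Cartan_D_n_bis_standard} the $\begin{pmatrix}D_4 & -D_3\\ -D_3 & D_4\end{pmatrix}$-part contributes, exactly as above, $\left(n-2\right)/2=n/2-1$ focus--focus quadruples, while the remaining four-dimensional block built from $D_5^{\prime}$ and $D_7$ is to be shown to contribute one further focus--focus quadruple; then all $n/2$ blocks are focus--focus, so the Williamson type is $\left(0,0,n/2\right)$.

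The step I expect to be the main obstacle is precisely this eigenvalue analysis of the coupled, off-block-diagonal pieces --- above all verifying that $\begin{pmatrix}D_4 & -D_3\\ -D_3 & D_4\end{pmatrix}$, and the four-dimensional block in \eqref{Cartan_D_n_bis_standard}, produce eigenvalues with \emph{both} real and imaginary parts nonzero, which is what distinguishes a focus--focus contribution from a mixture of elliptic and hyperbolic ones; the commuting-tensor-factor device indicated above is the tidiest way to handle all such $4\times4$ blocks at once. A secondary point requiring care is to check that the genericity hypothesis (rank equal to $2n$) really excludes every source of a zero eigenvalue --- not just each $h_i=0$, but also degenerate combinations such as $h_j\pm h_{\ell+j}=0$ in the $D_1\pm D_2$ blocks --- so that the Williamson type is genuinely well defined on every generic coadjoint orbit and the three displayed formulas exhaust the conjugacy classes recalled in \S\ref{subsec_cartan}.
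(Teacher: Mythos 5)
Your overall strategy is the same as the paper's: Theorem \ref{main_thm_general} is stated there with no separate proof, as an immediate consequence of the reduction to the spectrum of $j\left(Y_{\mathfrak{z}}\right)$ and of the normal forms recalled in \S\ref{subsec_cartan}, which is exactly the reduction you set up. Your eigenvalue counts for \eqref{Cartan_D_n_standard} and \eqref{Cartan_B_n_standard} are correct and complete: the skew blocks $D_1\pm D_2$ and $D_6$ give $2\ell+n-p$ elliptic pairs; the $D_5$ coupling gives $p-2\ell-2k$ hyperbolic pairs (and in the odd case the $D_5^{\prime\prime},D_8$ interaction converts one of these into an elliptic pair plus the single zero eigenvalue killed by the rank hypothesis); and each $4\times 4$ piece $\begin{pmatrix}J_a & -b\,\mathsf{E}_2\\ -b\,\mathsf{E}_2 & J_a\end{pmatrix}$ with $J_a=\begin{pmatrix}0&-a\\ a&0\end{pmatrix}$ coming from the $D_3,D_4$ coupling is conjugate to $\mathrm{diag}\left(J_a-b\,\mathsf{E}_2,\,J_a+b\,\mathsf{E}_2\right)$ and hence yields the focus--focus quadruple $\pm b\pm a\sqrt{-1}$.

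The gap sits exactly at the step you defer. For \eqref{Cartan_D_n_bis_standard} you assert that the four-dimensional piece built from $D_5^{\prime}$ and $D_7$ ``is to be shown'' to contribute one further focus--focus quadruple, but carried out on the matrix as displayed this fails: $D_5^{\prime}\pm D_7=\begin{pmatrix}h_{n-1} & \mp h_n\\ \pm h_n & -h_{n-1}\end{pmatrix}$ is traceless with determinant $h_n^2-h_{n-1}^2$, so each of the two uncoupled $2\times 2$ blocks has eigenvalues $\pm\sqrt{h_{n-1}^2-h_n^2}$; the four eigenvalues form a doubled real pair or a doubled purely imaginary pair, never a quadruple with both real and imaginary parts nonzero. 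The structural reason your commuting-tensor-factor device works for the $D_3,D_4$ blocks but not here is that there the symmetric coupling is a scalar multiple of $\mathsf{E}_2$, whereas $D_5^{\prime}=\mathrm{diag}\left(h_{n-1},-h_{n-1}\right)$ is not. The intended conclusion $\left(0,0,n/2\right)$ is the correct one for this conjugacy class --- the Cayley transform along $e_{n-1}+e_n$ makes $\left(e_{n-1}+e_n\right)(H)$ purely imaginary while $\left(e_{n-1}-e_n\right)(H)$ stays real, so $e_{n-1}(H)$ and $e_n(H)$ are genuinely complex and the weights $\pm e_{n-1},\pm e_n$ do form a quadruple $\pm a\pm b\sqrt{-1}$ --- but this must be argued from the root-theoretic description, or from a corrected normal form whose relevant $4\times 4$ block has the shape $\begin{pmatrix}D_7 & h_{n-1}\mathsf{E}_2\\ h_{n-1}\mathsf{E}_2 & D_7\end{pmatrix}$, rather than read off from \eqref{Cartan_D_n_bis_standard} as printed. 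Until that block is handled, the second of the three cases remains unproved.
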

\begin{remark}
In the case where the rank of the matrices \eqref{Cartan_D_n_standard}, \eqref{Cartan_D_n_bis_standard}, \eqref{Cartan_B_n_standard} is less than $2n$, the Williamson type in Theorem \ref{main_thm_general} appropriately changes.
\hfill $\square$
\end{remark}

\subsubsection{Williamson types for some concrete classes of step-two nilpotent Lie groups}\label{subsec_wt_conc}

We here determine the Williamson types of the equilibrium points $Y_{\mathfrak{v}}+Y_{\mathfrak{z}}\in\mathfrak{v}\dot{+}\mathfrak{z}$, where $Y_{\mathfrak{v}}\in\ker\left(j\left(Y_{\mathfrak{z}}\right)\right)$, for the Lie-Poisson equation of the concrete classes of step-two nilpotent Lie groups discussed in \S \ref{Certain classes of step-two nilpotent Lie groups}. 
The method is based on the eigenvalues of the operator $j\left( Y_{\mathfrak{z}} \right)\in\mathfrak{so}\left( \mathfrak{v},\langle\cdot,\cdot\rangle_{\mathfrak{v}} \right)$.

\paragraph{Step-two Carnot groups}
As for the step-two Carnot group, the operator $j\left(Y_{\mathfrak{z}}\right)$ is not full rank only in the case where $Y_{\mathfrak{z}}=0$. In this case, it is easily checked that $j\left(Y_{\mathfrak{z}}\right)=0$. We assume that $Y_{\mathfrak{z}}\neq0$. Then, the equilibrium points are given as $0+Y_{\mathfrak{z}}\in\mathfrak{v}\dot{+}\mathfrak{z}$ since $j\left(Y_{\mathfrak{z}}\right)$ is full rank operator for $Y_{\mathfrak{z}}\in\mathfrak{z}\setminus\{0\}$. For the same reason as before, $j\left(Y_{\mathfrak{z}}\right)$ is conjugate to \eqref{Cartan_D_n_standard} or \eqref{Cartan_D_n_bis_standard}. 
The dimension of $\mathfrak{v}$ is even by the argument in \S \ref{subsubsubsection_2_3_1} and, using Theorem \ref{main_thm_general}, we can determine the Williamson type of the equilibria as follows: 

\begin{corollary}\label{cor_carnot}
The equilibrium point $0+Y_{\mathfrak{z}}$ for the Lie-Poisson equation has the Williamson type $\left(2\ell-p+n,p-2\ell-2k,k\right)$ and $\left(0, 0, n/2\right)$, if $j\left(Y_{\mathfrak{z}}\right)$ is conjugate to \eqref{Cartan_D_n_standard} and \eqref{Cartan_D_n_bis_standard}, respectively. 
In particular, the equilibrium points for M\'{e}tivier groups are elliptic and hence Lyapunov stable. 

\hfill $\square$
\end{corollary}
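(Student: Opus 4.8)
The plan is to obtain the first assertion directly from Theorem~\ref{main_thm_general} and to settle the M\'etivier case by a short eigenvalue argument. First I would recall from the treatment of step-two Carnot groups in \S\S\S\ref{subsubsubsection_2_3_1} that $j(Z)\colon\mathfrak{v}\to\mathfrak{v}$ is surjective, hence bijective, for every $Z\in\mathfrak{z}\setminus\{0\}$. Consequently $\dim\mathfrak{v}$ is even, say $\dim\mathfrak{v}=2n$, so the induced scalar product $\langle\cdot,\cdot\rangle_{\mathfrak{v}}$ has signature $(p,q)$ with $p+q=2n$, and $j(Z)$ can only be conjugate, within $\mathfrak{so}(p,q)$, to a matrix of one of the two even-dimensional forms \eqref{Cartan_D_n_standard} or \eqref{Cartan_D_n_bis_standard}; the odd-dimensional family \eqref{Cartan_B_n_standard} does not occur. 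Moreover, bijectivity of $j(Z)$ gives $\ker\left(j(Z)\right)=\{0\}$, so the unique equilibrium of \eqref{eq_geodesic_flow} on the maximal-dimensional coadjoint orbit through $Z$ is $0+Z$, and it gives that the matrix of $j(Z)$ has rank $2n$, which is precisely the hypothesis of Theorem~\ref{main_thm_general}. Applying that theorem yields the Williamson types $\left(2\ell-p+n,\,p-2\ell-2k,\,k\right)$ and $\left(0,0,n/2\right)$ in the two cases.

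For a M\'etivier group I would use the characterisation recalled in \S\S\S\ref{subsubsubsection_2_3_1}, namely that the group is M\'etivier exactly when $\langle\cdot,\cdot\rangle$ is positive-definite and $j(Z)$ is surjective for every $Z\neq 0$. Positive-definiteness forces $p=0$ and $q=2n$, so that after choosing an orthonormal basis of $\mathfrak{v}$ the operator $j(Z)\in\mathfrak{so}\left(\mathfrak{v},\langle\cdot,\cdot\rangle_{\mathfrak{v}}\right)$ is represented by a real skew-symmetric matrix; all its eigenvalues are then purely imaginary, and since $j(Z)$ is invertible, none of them vanishes. Hence every eigenvalue is of the elliptic type $(1)$, the Williamson type of the equilibrium $0+Z$ is $\left(n,0,0\right)$, and in particular $k_h=k_f=0$. (This also matches the specialisation of the first formula, since $p=0$ forces $k=\ell=0$.)

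It remains to deduce Lyapunov stability. I would note that a skew-symmetric endomorphism with respect to a positive-definite scalar product is normal and hence diagonalisable over $\CC$, and that by the discussion around \eqref{lie-poisson on orbits} the linearisation of the restricted Lie-Poisson equation at $0+Z$ on the generic orbit is $j(Z)|_{\mathcal{O}}$, which for a Carnot group equals $j(Z)$ itself because the orbit is the whole affine subspace $\mathfrak{v}+Z$. Thus the linearisation matrix is simple with $k_h=k_f=0$, and the Proposition above asserting Lyapunov stability of a linear Hamiltonian equilibrium with $k_h=k_f=0$ and simple linearisation applies, giving stability of $0+Z$. (Alternatively, $\mathrm{Hess}_{0+Z}\left(h|_{\mathcal{O}}\right)=\langle\cdot,\cdot\rangle_{\mathfrak{v}}$ is positive-definite, so $0+Z$ is a strict local minimum of the reduced Hamiltonian and the Dirichlet--Lagrange criterion yields stability directly.)

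Since Theorem~\ref{main_thm_general} carries the bulk of the work, there is no genuine obstacle here. The two points that deserve a little care are checking that the Carnot condition supplies exactly the full-rank (isolated-equilibrium) hypothesis of that theorem while ruling out the type-$\mathsf{B}$ family, and, for the M\'etivier claim, observing that skew-symmetry with respect to a positive-definite form renders the linearisation automatically diagonalisable, so that the linear-stability Proposition can be invoked without an extra non-degeneracy assumption.
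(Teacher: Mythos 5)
Your proposal is correct and follows essentially the same route as the paper: the Carnot condition makes $j(Z)$ bijective, which forces $\dim\mathfrak{v}=2n$ to be even (so only the two even-dimensional Cartan subalgebra representatives \eqref{Cartan_D_n_standard} and \eqref{Cartan_D_n_bis_standard} can occur) and supplies the full-rank hypothesis of Theorem \ref{main_thm_general}, after which the Williamson types are read off; the M\'etivier case then follows because skew-symmetry with respect to a positive-definite inner product yields a diagonalizable operator with nonzero purely imaginary eigenvalues. Your explicit remarks on why the type-$\mathsf{B}$ family is excluded and why the linearisation is simple are exactly the (implicit) content of the paper's deduction, so nothing further is needed.
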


\paragraph{$H$-type and pseudo-$H$-type Lie groups}
Let $G$ be a pseudo-$H$-type Lie group equipped with a left-invariant pseudo-Riemannian metric, i.e. $G$ is a step-two nilpotent Lie group with the condition \eqref{relation_pseudo-h-type}.

By using the relation \eqref{relation_pseudo-h-type} and the skew-symmetry of $j\left( Z \right)$ for all $Z\in\mathfrak{z}$ with respect to the induced scalar product $\langle\cdot,\cdot\rangle_{\mathfrak{v}}$ on $\mathfrak{v}$, we have
\begin{equation}
\left(j\left( Z \right)\right)^2=-\langle Z,Z \rangle_{\mathfrak{z}}\mathrm{id}_{\mathfrak{v}}, \qquad Z\in\mathfrak{z}.
\end{equation}
This relation immediately implies that all the eigenvalues of $j\left( Z \right)$ where $Z\in\mathfrak{z}$ are purely imaginary (respectively real) if $\langle Z,Z \rangle_{\mathfrak{z}}$ is positive (respectively negative).

On a generic coadjoint orbit\footnote{By generic coadjoint orbits, we mean those passing through $Y=Y_{\mathfrak{v}}+Y_{\mathfrak{z}}$ satisfying $\langle Y_{\mathfrak{z}},Y_{\mathfrak{z}} \rangle_{\mathfrak{z}}\neq0$}, the equilibrium point is given as $0+Y_{\mathfrak{z}}$ with $\langle Y_{\mathfrak{z}},Y_{\mathfrak{z}} \rangle_{\mathfrak{z}}\neq0$, and thus the stability is charactarized as follows:
\begin{theorem}\label{stability_pseudo-h-type}
  For the pseudo-$H$-type Lie groups, the stability of the equilibrium points $0+Y_{\mathfrak{z}}\in\mathfrak{v}\dot{+}\mathfrak{z}$ for the Lie-Poisson equation \eqref{eq_geodesic_flow} are described as follows:
  \begin{enumerate}
    \item If $\langle Y_{\mathfrak{z}},Y_{\mathfrak{z}} \rangle_{\mathfrak{z}}<0$, $j\left( Y_{\mathfrak{z}} \right)$ has only real eigenvalues, and thus the equilibrium point $0+Y_{\mathfrak{z}}\in\mathfrak{v}\dot{+}\mathfrak{z}$ is unstable.
    \item If $\langle Y_{\mathfrak{z}},Y_{\mathfrak{z}} \rangle_{\mathfrak{z}}>0$, $j\left( Y_{\mathfrak{z}} \right)$ has only purely imaginary eigenvalues, and thus the equilibrium point $0+Y_{\mathfrak{z}}\in\mathfrak{v}\dot{+}\mathfrak{z}$ is Lyapunov stable.
  \end{enumerate}
  \hfill $\square$
\end{theorem}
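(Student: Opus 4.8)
The plan is to reduce the whole statement to the elementary observation, already recorded in the excerpt, that $\left(j(Z)\right)^2 = -\langle Z, Z\rangle_{\mathfrak{z}}\,\mathrm{id}_{\mathfrak{v}}$ for every $Z\in\mathfrak{z}$, and then to run this through the stability criteria for linear Hamiltonian systems established in the preceding subsection. First I would recall from \S\S\ref{subsec_lp} and the discussion preceding the theorem that the dynamics on a generic coadjoint orbit $\mathcal{O}$ through $0+Z$ is governed by the linear equation \eqref{lie-poisson on orbits}, that the equilibrium $0+Z$ (with $0\in\ker(j(Z))$) is the unique one on such an orbit, and that $j(Z)$ and $j(Z)|_{\mathcal{O}}$ share the same nonzero eigenvalues; hence it suffices to analyse the spectrum of $j(Z)\in\mathfrak{so}(\mathfrak{v},\langle\cdot,\cdot\rangle_{\mathfrak{v}})$.

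Next I would treat the two cases. If $\langle Z,Z\rangle_{\mathfrak{z}}<0$, write $\langle Z,Z\rangle_{\mathfrak{z}}=-c^2$ with $c>0$; then $\left(j(Z)\right)^2=c^2\,\mathrm{id}_{\mathfrak{v}}$, so the minimal polynomial of $j(Z)$ divides $(t-c)(t+c)$, meaning $j(Z)$ is diagonalisable with eigenvalues among $\{+c,-c\}$, which are real and nonzero. By the eigenvalue classification in the subsection on Williamson types, every such eigenvalue is of type $(2)$ (hyperbolic), so the Williamson type is $(0,k_h,0)$ with $k_h\neq 0$ (here one uses that $j(Z)\ne 0$, which follows from the nondegeneracy of $\langle\cdot,\cdot\rangle_{\mathfrak{v}}$ together with $\langle Z,Z\rangle_{\mathfrak{z}}\ne 0$); by Proposition on instability, $0+Z$ is unstable. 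If $\langle Z,Z\rangle_{\mathfrak{z}}>0$, write $\langle Z,Z\rangle_{\mathfrak{z}}=c^2$; then $\left(j(Z)\right)^2=-c^2\,\mathrm{id}_{\mathfrak{v}}$, so the minimal polynomial divides $t^2+c^2$, hence $j(Z)$ is diagonalisable (over $\mathbb{C}$) with all eigenvalues equal to $\pm c\sqrt{-1}$, i.e. purely imaginary and nonzero, giving Williamson type $(k_e,0,0)$ with the linearisation matrix simple; by the Proposition on Lyapunov stability of linear Hamiltonian systems, $0+Z$ is Lyapunov stable.

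The only genuinely delicate points, which I would address carefully rather than gloss over, are (i) checking that on a \emph{generic} orbit the restricted Hamiltonian system really does have $0+Z$ as an isolated equilibrium with linearisation $j(Z)|_{\mathcal{O}}$, so that the Williamson-type formalism applies — this is exactly the content of the remarks made just before the theorem, and I would simply cite that discussion; and (ii) ensuring that $\langle\cdot,\cdot\rangle_{\mathfrak{v}}$ restricted to the relevant invariant subspaces is itself nondegenerate so that $j(Z)|_{\mathcal{O}}$ is a genuine Hamiltonian matrix — this follows from the pseudo-$H$-type relation, which forces $j(Z)$ to be (up to scaling) an orthogonal transformation and hence an isomorphism of $\mathfrak{v}$, so that $\ker(j(Z))=0$, $\mathcal{O}=\mathfrak{v}+Z$ is already generic, and the restriction is the whole of $j(Z)$. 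Granting these, the argument is essentially the two-line spectral computation above, so I expect the main obstacle to be purely expository: phrasing the passage from ``$j(Z)$ has such-and-such spectrum'' to ``the equilibrium has such-and-such Williamson type and stability'' precisely enough to invoke the two propositions of \S\S\ref{subsec_cartan}'s preceding subsection without circularity.
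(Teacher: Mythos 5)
Your proposal is correct and follows essentially the same route as the paper: derive $\left(j(Z)\right)^2=-\langle Z,Z\rangle_{\mathfrak{z}}\,\mathrm{id}_{\mathfrak{v}}$ from the pseudo-$H$-type relation and skew-symmetry, read off that the spectrum is purely real or purely imaginary according to the sign of $\langle Z,Z\rangle_{\mathfrak{z}}$, and invoke the instability and Lyapunov-stability propositions. The only difference is that you make explicit the diagonalizability of $j(Z)$ and the genericity of the orbit (since $j(Z)$ is invertible), points the paper leaves implicit but which are indeed needed to apply the Lyapunov-stability criterion.
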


We verify Theorem \ref{stability_pseudo-h-type} for the example of pseudo-$H$-type Lie groups, which was considered at the end of \S \ref{subsubsubsection_2_3_1}. Recall that, in Example \ref{ex_pht}, \S \ref{subsubsubsection_2_3_1}, we consider a pseudo-$H$-type Lie algebra $\mathfrak{g}\cong\RR^{6}$ generated by $X_{1}, X_{2},X_{3},X_{4},Z_{1},Z_{2}$ with commutation relations $\left[X_{1},X_{2}\right]=\left[X_{3},X_{4}\right]=Z_{1}$, $\left[X_{1},X_{3}\right]=\left[X_{2},X_{4}\right]=Z_{2}$, $\left[X_{1},X_{4}\right]=\left[X_{2},X_{3}\right]=\left[X_{i},Z_{j}\right]=0$, where $i=1,2,3,4$ and $j=1,2$ and a scalar product on it defined through $\langle X_{1},X_{1}\rangle=\langle X_{2},X_{2}\rangle=\langle Z_{1},Z_{1}\rangle=1$, $\langle X_{3},X_{3}\rangle=\langle X_{4},X_{4}\rangle=\langle Z_{2},Z_{2}\rangle=-1$, $\langle X_{i},X_{j}\rangle=\langle X_{i},Z_{k}\rangle=\langle Z_{1},Z_{2}\rangle=0$, where $i,j=1,2,3,4$ with $i\neq j$ and $k=1,2$.

If we write $Y_{\mathfrak{z}}=aZ_{1}+bZ_{2}\in\mathfrak{z}=\mathrm{span}_{\mathbb{R}}\left\{Z_{1},Z_{2}\right\}$ for the equilibrium point $0+Y_{\mathfrak{z}}\in\mathfrak{v}\dot{+}\mathfrak{z}$, $j\left(Y_{\mathfrak{z}}\right)$ is concretely written as
\begin{align*}
j\left(Y_{\mathfrak{z}}\right)
=
\begin{pmatrix}
0 & -a & b & 0 \\
a & 0 & 0 & b \\
b & 0 & 0 & a \\
0 & b & -a & 0 
\end{pmatrix}.
\end{align*}
In this case, the eigenvalues of $j\left(Y_{\mathfrak{z}}\right)$ are given as $\displaystyle\pm\sqrt{b^{2}-a^{2}}=\pm\sqrt{-\langle Y_{\mathfrak{z}},Y_{\mathfrak{z}}\rangle_{\mathfrak{z}}}$. Thus, if $\langle Y_{\mathfrak{z}},Y_{\mathfrak{z}}\rangle_{\mathfrak{z}}=a^{2}-b^{2}<0$ (respectively if $\langle Y_{\mathfrak{z}},Y_{\mathfrak{z}}\rangle_{\mathfrak{z}}=a^{2}-b^{2}>0$), $j\left(Y_{\mathfrak{z}}\right)$ has only real (repectively only purely imaginary) eigenvalues.

The following corollary is immediate from Theorem \ref{stability_pseudo-h-type}, since for the $H$-type Lie groups, the induced scalar product $\langle\cdot,\cdot\rangle_{\mathfrak{z}}$ on $\mathfrak{z}$ is positive-definite. 
\begin{corollary}\label{cor_ht}
For the $H$-type Lie groups, the equilibrium points $0+Y_{\mathfrak{z}}\in\mathfrak{v}\dot{+}\mathfrak{z}$, with $Y_{\mathfrak{z}}\neq 0$, of the Lie-Poisson equation \eqref{eq_geodesic_flow} is Lyapunov stable. 
\hfill $\square$
\end{corollary}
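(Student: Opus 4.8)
The plan is to obtain the corollary as an immediate specialization of Theorem \ref{stability_pseudo-h-type}, using the fact that the defining scalar product of an $H$-type Lie group is an inner product, in particular positive-definite.

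First I would recall that, by definition, an $H$-type Lie group is a step-two nilpotent Lie group satisfying \eqref{relation_pseudo-h-type} whose ambient scalar product $\langle\cdot,\cdot\rangle$ is positive-definite; it is therefore in particular a pseudo-$H$-type Lie group, so Theorem \ref{stability_pseudo-h-type} applies to it. Since $\langle\cdot,\cdot\rangle$ is positive-definite, so is its restriction $\langle\cdot,\cdot\rangle_{\mathfrak{z}}$ to the centre $\mathfrak{z}$, and hence $\langle Z,Z\rangle_{\mathfrak{z}}>0$ for every $Z\in\mathfrak{z}$ with $Z\neq 0$. Thus we are always in the situation covered by the second item of Theorem \ref{stability_pseudo-h-type}.

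Then I would invoke that item directly: the relation $\left(j(Z)\right)^2=-\langle Z,Z\rangle_{\mathfrak{z}}\,\mathrm{id}_{\mathfrak{v}}$ with $\langle Z,Z\rangle_{\mathfrak{z}}>0$ shows that the minimal polynomial of $j(Z)$ divides $x^2+\langle Z,Z\rangle_{\mathfrak{z}}$, which has the two distinct purely imaginary roots $\pm\sqrt{-1}\,\sqrt{\langle Z,Z\rangle_{\mathfrak{z}}}$. Hence $j(Z)$ is semisimple with purely imaginary spectrum, and the same holds for $j(Z)|_{\mathcal{O}}$ on a generic coadjoint orbit $\mathcal{O}$, which carries the same eigenvalues. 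Therefore the linearization \eqref{lie-poisson on orbits} of the restricted Lie-Poisson equation at the equilibrium $0+Z$ has Williamson type $(k_e,0,0)$ with a diagonalizable linearization matrix, and by the Proposition on Lyapunov stability of simple linear Hamiltonian systems the equilibrium $0+Z$ is Lyapunov stable.

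I do not anticipate any real obstacle here: the corollary is a routine specialization of Theorem \ref{stability_pseudo-h-type}. The only point worth stating explicitly is that Lyapunov stability, rather than merely the absence of hyperbolic and focus-focus components, requires the linearization matrix to be simple, and this is guaranteed by the quadratic relation $\left(j(Z)\right)^2=-\langle Z,Z\rangle_{\mathfrak{z}}\,\mathrm{id}_{\mathfrak{v}}$ together with $Z\neq 0$ and the positive-definiteness of $\langle\cdot,\cdot\rangle_{\mathfrak{z}}$.
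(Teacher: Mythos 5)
Your proposal is correct and follows exactly the paper's route: the corollary is obtained as an immediate specialization of Theorem \ref{stability_pseudo-h-type}, using that the scalar product of an $H$-type algebra is positive-definite so that $\langle Z,Z\rangle_{\mathfrak{z}}>0$ for $Z\neq 0$. Your additional remark that $\left(j(Z)\right)^2=-\langle Z,Z\rangle_{\mathfrak{z}}\,\mathrm{id}_{\mathfrak{v}}$ forces $j(Z)$ to be semisimple is a worthwhile explicit check of the simplicity hypothesis in the Lyapunov-stability proposition, which the paper leaves implicit.
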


In particular, the $j$-mapping $J_{\zeta}$ for the Heisenberg Lie algebra $\mathfrak{h}_{2n+1}$ in \S \ref{subsec_heis} has the eigenvalue $\pm \sqrt{-1}\zeta$, where $\zeta\in \mathbb{R}$ is the coordinate in the centre $\mathfrak{z}$ of the Lie algebra, and hence the equilibrium point is Lyapunov stable as mentioned in Corollary \ref{cor_ht}. 

\paragraph{Heisenberg-Reiter groups}
We consider a Heisenberg-Reiter group $G$ whose Lie algebra is decomposed into the direct sum $\mathfrak{g}=\mathfrak{v}\dot{+}\mathfrak{z}$ of the centre $\mathfrak{z}$ and the complement $\mathfrak{v}$ as in \S \ref{subsec_HR}. 
The complement allows the orthogonal direct sum decomposition as $\mathfrak{v}=\mathfrak{v}_1\dot{+}\mathfrak{v}_2$ and we put $m_i\coloneq\dim\mathfrak{v}_i$, $i=1,2$, and $\left( p_1,q_1 \right)\coloneq \mathrm{sign}\left( \langle \cdot,\cdot\rangle_{\mathfrak{v}_1} \right)$, $\left( p_2,q_2 \right)\coloneq \mathrm{sign}\left( \langle \cdot,\cdot\rangle_{\mathfrak{v}_2} \right)$. 
Clearly, we have $m_i=p_i+q_i$, $i=1,2$. 
Without loss of generality, we assume $m_1 \geq m_2$. 

To find the types of the eigenvalues of the $j$-mapping $j(Z)=\begin{pmatrix}0 & -j_B^{12}(Z_0) \\ j_B^{21}(Z_0) & 0\end{pmatrix}$, as in \eqref{j-mapping for Heisenberg-Reiter}, we here consider an analogue of the singular value decomposition for the operator $\Phi\coloneq j_B^{12}(Z_0)$ through orthogonal transformations with respect to the indefinite scalar products $\langle\cdot,\cdot\rangle_{\mathfrak{v}_1}$ and $\langle\cdot,\cdot\rangle_{\mathfrak{v}_2}$. 
Such an extension of the singular value decompositions is discussed in \cite{hassi_1991}. 
The following proof is along the line of \cite[Theorem 2.4]{hassi_1991}, but we considered a more detailed version of it. 

As a technical assumption, we suppose that the operator $\Phi^{\ast}\circ \Phi\left( =j_B^{21}\left(Z_0\right)j_B^{12}\left(Z_0\right) \right)$ is diagonalizable over the real number field and that the image $\mathrm{Im}\left( \Phi \right)$ of $\Phi=j_B^{12}(Z_0)\colon\mathfrak{v}_2\to\mathfrak{v}_{1}$ is a non-degenerate linear subspace of $\left( \mathfrak{v}_1,\langle\cdot,\cdot\rangle_{\mathfrak{v}_1} \right)$. 
Since $\Phi^{\ast}\circ \Phi$ is diagonalizable over $\mathbb{R}$, we take a set of the eigenvectors $B_1, \dots, B_{m_2}$ for $\Phi^{\ast}\circ \Phi$ respectively belonging to the eigenvalues $\lambda_1, \dots, \lambda_{m_2}\in\mathbb{R}$ which forms an orthogonal basis of $\left( \mathfrak{v}_2,\langle\cdot,\cdot\rangle_{\mathfrak{v}_2} \right)$ satisfying
\begin{align*}
  \langle B_i,B_j\rangle_{\mathfrak{v}_2}=
  \begin{dcases}
    \,1\, &\text{if $i=j$ and $1\leq i \leq p_2$},
    \\
    -1\, &\text{if $i=j$ and $p_2+1\leq i \leq m_2$},
    \\
    \,0 &\text{if $i \neq j$}, 
  \end{dcases}
\end{align*}
namely, in such a way as the matrix representation of the scalar product $\left\langle \cdot, \cdot \right\rangle_{\mathfrak{v}_2}$ with respect to the  basis $\left(B_1, \ldots, B_{m_2}\right)$ is given by $\mathsf{E}_{p_2}\oplus \left(-\mathsf{E}_{q_2}\right)$.
To prove this, since $\mathfrak{v}_2$ can be decomposed into the direct sum of the eigenspaces $W_{\lambda_i} $ for $1\leq i \leq m_2$, it is enough to check that $W_{\lambda_1},\dots,W_{\lambda_{m_2}}$ are pairwise orthogonal to each other with respect to $\langle\cdot,\cdot\rangle_{\mathfrak{v}_2}$ and $W_{\lambda_i}$ is a non-degenerate subspace of $\left( \mathfrak{v}_2,\langle\cdot,\cdot\rangle_{\mathfrak{v}_2} \right)$ for any $i,\,1\leq i \leq m_2$. Since $\Phi^{\ast}\circ \Phi$ is a self-adjoint operator, for any $X\in W_{\lambda_i},Y\in W_{\lambda_j}$, we have $\left( \lambda_i-\lambda_j \right)\langle X,Y \rangle_{\mathfrak{v}_2}=0$. This implies that $\langle X,Y \rangle_{\mathfrak{v}_2}=0$ unless $i=j$, and thus the first condition holds. Next, to show that $W_{\lambda_i}$ is a non-degenerate subspace, we assume that $X\in W_{\lambda_i}$ satisfies $\langle X,Z^{\prime} \rangle_{\mathfrak{v}_2}=0$ for any $Z^{\prime}\in W_{\lambda_i} $. By the orthogonality of $W_{\lambda_i}$'s and the eigenspace decomposition $\mathfrak{v}_2=W_{\lambda_1}\oplus\dots\oplus W_{\lambda_{m_2}}$, we have $\langle X, Z\rangle_{\mathfrak{v}_2}=0$ for any $Z\in \mathfrak{v}_2$, which yields $X=0$ by the non-degeneracy of $\langle\cdot,\cdot\rangle_{\mathfrak{v}_2}$ on $\mathfrak{v}_2$.

Put $r\coloneq \mathrm{rank}\left( \Phi \right)$. 
By $d_1$ (resp. $d_2$) we denote the number of positive (resp. negative) eigenvalues $\lambda_i$ for $\Phi^{\ast}\circ\Phi$ with $1\leq i \leq p_2$, for which the corresponding eigenvectors $B_i$ satisfy $\left\langle B_i, B_i\right\rangle_{\mathfrak{v}_2}=1$. 
Similarly, the number $d_3$ (resp. $d_4$) stands for the number of negative (resp. positive) eigenvalues $\lambda_i$ for $\Phi^{\ast}\circ\Phi$ with $p_2+1\leq i \leq m_2$, which corresponds to $B_i$ satisfying $\left\langle B_i, B_i\right\rangle_{\mathfrak{v}_2}=-1$. 
Note that $1\leq d_1+ d_2\leq p_2$, $1\leq d_3+d_4\leq q_2$, and $r=d_1+d_2+d_3+d_4$. 
Without loss of generality, we can also assume that 
\begin{align*}
  \begin{dcases}
    \lambda_i > 0 &\text{if $1\leq i\leq d_1\,\text{or}\,p_2+d_3+1\leq i \leq p_2+d_3+d_4$},
    \\
    \lambda_i < 0 &\text{if $d_1+1\leq i \leq d_1+d_2\,\text{or}\,p_2+1\leq i \leq p_2+d_3$},
    \\
    \lambda_i =0 &\text{otherwise}. 
  \end{dcases}
\end{align*}
Note that $\Phi\left(B_1\right), \ldots, \Phi\left(B_{d_1+d_2}\right), \Phi\left(B_{p_2+1}\right), \ldots, \Phi\left(B_{p_2+d_3+d_4}\right)$ are linearly independent and span $\mathrm{Im}\left(\Phi\right)$. 
In fact, assuming 
\[
\sum_{i=1, \ldots, d_1+d_2, p_2+1, \ldots, p_2+d_3+d_4}c_i \Phi\left(B_i\right)=0, \quad c_i\in \mathbb{R}, 
\]
we have 
\[
\sum_{i=1, \ldots, d_1+d_2, p_2+1, \ldots, p_2+d_3+d_4}c_i \Phi^{\ast}\circ \Phi\left(B_i\right)=0, \quad 
\text{and hence}\, 
\sum_{i=1, \ldots, d_1+d_2, p_2+1, \ldots, p_2+d_3+d_4}c_i \lambda_i B_i=0.
\]
By the linear independence of $B_i$'s, we have $c_i\lambda_i=0$, $i=1, \ldots, d_1+d_2, p_2+1, \ldots, p_2+d_3+d_4$, which means $c_i=0$, $i=1, \ldots, d_1+d_2, p_2+1, \ldots, p_2+d_3+d_4$, as $\lambda_i\neq 0$, $i=1, \ldots, d_1+d_2, p_2+1, \ldots, p_2+d_3+d_4$. 

Taking the formula 
\begin{align*}
&\left\langle \dfrac{1}{\sqrt{\left|\lambda_i\right|}}\Phi\left(B_i\right), \dfrac{1}{\sqrt{\left|\lambda_i\right|}}\Phi\left(B_i\right)\right\rangle_{\mathfrak{v}_2}
=
\dfrac{1}{\left|\lambda_i\right|}\left\langle \Phi\left(B_i\right), \Phi\left(B_i\right)\right\rangle_{\mathfrak{v}_1}
=
\dfrac{1}{\left|\lambda_i\right|}\left\langle B_i, \Phi^{\ast}\circ \Phi\left(B_i\right)\right\rangle_{\mathfrak{v}_2} \\
&\qquad\qquad =
\dfrac{\lambda_i}{\left|\lambda_i\right|}\left\langle B_i, B_i\right\rangle_{\mathfrak{v}_2} 
=
\begin{dcases}
1 & \text{if $1\leq i\leq d_1\; \text{or}\; p_2+1\leq i \leq p_2+d_3$}, \\
-1 & \text{if $d_1+1\leq i \leq d_1+d_2\; \text{or}\; p_2+d_3+1\leq i \leq p_2+d_3+d_4$},
\end{dcases}
\end{align*}
into account, we set 
\begin{equation}\label{A_by_B}
A_i
\coloneq
\begin{dcases}
    \dfrac{1}{\sqrt{\left|\lambda_i\right|}}\Phi\left(B_i\right)\, &\text{if $1\leq i\leq d_1$},
    \\
    \dfrac{1}{\sqrt{\left|\lambda_{p_2+i-d_1}\right|}}\Phi\left(B_{p_2+i-d_1}\right)\, &\text{if $d_1+1\leq i\leq d_1+d_3$},
    \\
    \dfrac{1}{\sqrt{\left|\lambda_{d_1+i-p_1}\right|}}\Phi\left(B_{d_1+i-p_1}\right)\, &\text{if $p_{1}+1\leq i\leq p_{1}+d_2$},
    \\
    \dfrac{1}{\sqrt{\left|\lambda_{p_2+d_3+i-p_1-d_2}\right|}}\Phi\left(B_{p_2+d_3+i-p_1-d_2}\right)\, &\text{if $p_{1}+d_2+1 \leq  i\leq p_{1}+d_2+d_4$},
  \end{dcases}
\end{equation}
for $i=1, \ldots, d_1+d_2, p_2+1, \ldots, p_2+d_3+d_4$. 
Since $\mathrm{Im}\left(\Phi\right)\subset \mathfrak{v}_1$ is non-degenerate with respect to $\left\langle \cdot, \cdot\right\rangle_{\mathfrak{v}_1}$, we can extend the vectors $A_i$, $i=1, \ldots, d_1+d_2, p_2+1, \ldots, p_2+d_3+d_4$, to a basis $A_i$, $1\leq i\leq m_1$, of $\mathfrak{v}_1$ such that 
\begin{align*}
  \langle A_i, A_j\rangle_{\mathfrak{v}_1}=
  \begin{dcases}
    \,1\, &\text{if $i=j$ and $1\leq i\leq p_1$},
    \\
    -1\, &\text{if $i=j$ and $p_1+1\leq i \leq m_1$},
    \\
    \,0\, &\text{if $i\neq j$}.
  \end{dcases}
\end{align*}
This is equivalent to say that the matrix representation of the scalar product $\left\langle \cdot, \cdot \right\rangle_{\mathfrak{v}_1}$ with respect to the  basis $\left(A_1, \ldots, A_{m_1}\right)$ is given by $\mathsf{E}_{p_1}\oplus \left(-\mathsf{E}_{q_1}\right)$. 

By \eqref{A_by_B}, we have 
\[
\Phi\left(B_i\right)
=
\begin{dcases}
    \sqrt{\left|\lambda_i\right|}A_i\, &\text{if $1\leq i\leq d_1$},
    \\
    \sqrt{\left|\lambda_{i}\right|}A_{p_1+i-d_1}\, &\text{if $d_1+1\leq i\leq d_1+d_2$},
    \\
    \sqrt{\left|\lambda_{i}\right|}A_{d_1+i-p_2}\, &\text{if $p_2+1\leq i\leq p_{2}+d_3$},
    \\
    \sqrt{\left|\lambda_{i}\right|}A_{p_1+d_2+i-p_2-d_3}\, &\text{if $p_{2}+d_3+1 \leq i\leq p_{2}+d_3+d_4$}.
  \end{dcases}
\]
Thus, we obtain the matrix representation of $\Phi=j_B^{12}(Z_0)$ with respect to the basis $\left(A_1, \ldots, A_{m_1}\right)$ of $\mathfrak{v}_1$ and the one $\left(B_1, \ldots, B_{m_2}\right)$ of $\mathfrak{v}_2$ as 
\begin{align*}
  \begin{array}{cccc:ccccc}
    \ldelim({6}{4mm} & D_1 & 0 & 0 & 0 & 0 & 0 & \rdelim){6}{1mm} & \rdelim\}{3}{*}[$p_1$] \\
    & 0 & 0 & 0 & D_3 & 0 & 0 & \\
    & 0 & 0 & 0 & 0 & 0 & 0 & \\ \cdashline{2-7}
    & 0 & D_2 & 0 & 0 & 0 & 0 & & \rdelim\}{3}{*}[$q_1$] \\
    & 0 & 0 & 0 & 0 & D_4 & 0 & \\ 
    & 0 & 0 & 0 & 0 & 0 & 0 & \\
    & \multicolumn{3}{l}{\underbrace{\hspace{12ex}}_{\begin{matrix} p_2 \end{matrix}}} & \multicolumn{3}{l}{\underbrace{\hspace{12ex}}_{\begin{matrix} q_2 \end{matrix}}}
  \end{array}. 
\end{align*}
Here, the diagonal matrices $D_1,D_2,D_3,D_4$ are written as 
\begin{align*}
D_1
&=
\mathrm{diag}\left(\sqrt{\left|\lambda_1\right|}, \ldots, \sqrt{\left|\lambda_{d_1}\right|}\right), \\
D_2
&=
\mathrm{diag}\left(\sqrt{\left|\lambda_{d_1+1}\right|}, \ldots, \sqrt{\left|\lambda_{d_1+d_2}\right|}\right), \\
D_3
&=
\mathrm{diag}\left(\sqrt{\left|\lambda_{p_2+1}\right|}, \ldots, \sqrt{\left|\lambda_{p_2+d_3}\right|}\right), \\
D_4
&=
\mathrm{diag}\left(\sqrt{\left|\lambda_{p_2+d_3+1}\right|}, \ldots, \sqrt{\left|\lambda_{p_2+d_3+d_4}\right|}\right).
\end{align*}

Consequently, we have the matrix representation of the operator $\begin{pmatrix} 0 & -j_B^{12}\left(Z_0\right) \\ j_B^{21}\left(Z_0\right) & 0 \end{pmatrix}$ with respect to the above basis as 
\begin{align*}
  \begin{pmatrix}
    0 & 0 & 0 & 0 & 0 & 0 & -D_1 & 0 & 0 & 0 & 0 & 0 \\
    0 & 0 & 0 & 0 & 0 & 0 & 0 & 0 & 0 & -D_3 & 0 & 0\\
    0 & 0 & 0 & 0 & 0 & 0 & 0 & 0 & 0 & 0 & 0 & 0\\
    0 & 0 & 0 & 0 & 0 & 0 & 0 & -D_2 & 0 & 0 & 0 & 0 \\
    0 & 0 & 0 & 0 & 0 & 0 & 0 & 0 & 0 & 0 & -D_4 & 0 \\
    0 & 0 & 0 & 0 & 0 & 0 & 0 & 0 & 0 & 0 & 0 & 0 \\
    D_1 & 0 & 0 & 0 & 0 & 0 & 0 & 0 & 0 & 0 & 0 & 0 \\
    0 & 0 & 0 & -D_2 & 0 & 0 & 0 & 0 & 0 & 0 & 0 & 0 \\
    0 & 0 & 0 & 0 & 0 & 0 & 0 & 0 & 0 & 0 & 0 & 0 \\
    0 & -D_3 & 0 & 0 & 0 & 0 & 0 & 0 & 0 & 0 & 0 & 0 \\
    0 & 0 & 0 & 0 & D_4 & 0 & 0 & 0 & 0 & 0 & 0 & 0 \\
    0 & 0 & 0 & 0 & 0 & 0 & 0 & 0 & 0 & 0 & 0 & 0 \\
  \end{pmatrix},
\end{align*}
for which the number of purely imaginary, real, complex, and zero eigenvalues, respectively, is given as the quadruple $\displaystyle \left( 2(d_1+d_4), 2(d_2+d_3), 0, m_1+m_2-2r\right)$.

We recall the Lie algebra $\mathfrak{g}=\mathfrak{u}_1\dot{+}\mathfrak{u}_2\dot{+}\mathfrak{w}$ of the Heisenberg-Reiter group, equipped with a  biliear form $B: \mathfrak{u}_1\times \mathfrak{u}_2\rightarrow \mathfrak{w}$ induced by a Lie bracket on $\mathfrak{g}$, was discussed in \S \ref{subsec_HR}. The centre $\mathfrak{z}$ of $\mathfrak{g}$ is given as $\mathfrak{z}=\mathfrak{u}_1^{\circ}\dot{+}\mathfrak{u}_2^{\circ}\dot{+}\mathfrak{w}$, where $\mathfrak{u}_1^{\circ}=\left\{U_1\in \mathfrak{u}_1\mid \forall U_2\in \mathfrak{u}_2,\; B\left(U_1, U_2\right)=0\right\}$ and $\mathfrak{u}_2^{\circ}=\left\{U_2\in \mathfrak{u}_2\mid \forall U_1\in \mathfrak{u}_1,\; B\left(U_1, U_2\right)=0\right\}$. With this in mind, we write $Y_{\mathfrak{z}}=\overline{Y_{\mathfrak{z}}}+\left(Y_{\mathfrak{z}}\right)_{0}\in\mathfrak{z}=\mathfrak{u}_1^{\circ}\dot{+}\mathfrak{u}_2^{\circ}\dot{+}\mathfrak{w}$ with $\overline{Y_{\mathfrak{z}}}\in \mathfrak{u}_1^{\circ}\dot{+}\mathfrak{u}_2^{\circ}$ and $\left(Y_{\mathfrak{z}}\right)_{0}\in\mathfrak{w}$ for the equilibrium points $Y=Y_{\mathfrak{v}}+Y_{\mathfrak{z}}$, where $Y_{\mathfrak{v}}\in\ker\left(j\left(Y_{\mathfrak{z}}\right)\right)$, for the Lie-Poisson equation. Then, the stability of equilibria for the Heisenberg-Reiter group is charactarized as follows:
\begin{theorem}\label{hei_rei}
For the Heisenberg-Reiter group, if the operator $j_B^{21}\left(\left(	Y_{\mathfrak{z}}\right)_{0}\right) j_B^{12}\left(\left(	Y_{\mathfrak{z}}\right)_{0}\right)\colon\mathfrak{v}_2\to\mathfrak{v}_{2}$ is diagonalizable over the real number field and if the image of $j_B^{12}\left(\left(	Y_{\mathfrak{z}}\right)_{0}\right)\colon\mathfrak{v}_2\to\mathfrak{v}_{1}$ is a non-degenerate subspace of $\left( \mathfrak{v}_1,\langle\cdot,\cdot\rangle_{\mathfrak{v}_1} \right)$, the equilibrium point $Y_{\mathfrak{v}}+Y_{\mathfrak{z}}$, where $Y_{\mathfrak{v}}\in\ker\left(j\left(Y_{\mathfrak{z}}\right)\right)$, for the Lie-Poisson equation described in the first component of \eqref{eq_geodesic_flow} has the Williamson type $\left(d_1+d_4, d_2+d_3, 0 \right)$. 
  \hfill $\square$
\end{theorem}

\paragraph{Step-two nilpotent Lie groups associated to semi-simple modules}
Let $G$ be a step-two nilpotent Lie group whose Lie algebra $\left( \mathfrak{g},\left[ \cdot,\cdot \right] \right)$ is a real step-two nilpotent Lie algebra arising from a given semi-simple module $\left( \mathfrak{v},j \right)$. 
Then, the Lie algebra $\mathfrak{g}$ has the direct sum decomposition $\mathfrak{g}=\mathfrak{v}\dot{+}\mathfrak{z}$ equipped with an inner product $\langle\cdot,\cdot\rangle$ satisfying the three conditions described in \S \ref{Step-two Lie groups associated to semi-simple modules}.

From the condition (E\ref{cond_e1}) in \S \ref{Step-two Lie groups associated to semi-simple modules}, the restriction $\left\langle \cdot, \cdot \right\rangle_{\mathfrak{v}}:=\left\langle \cdot, \cdot \right\rangle|_{\mathfrak{v}\times \mathfrak{v}}$ is again an inner product on $\mathfrak{v}$.
Moreover, from the condition (E\ref{cond_e2}), we can regard the operators $j\left( Z \right)$ for $Z\in\mathfrak{z}$ as skew-symmetric matrices with respect to a suitable basis in $\mathfrak{v}$. Then, all the eigenvalues of $j\left( Z \right)$ for $Z\in\mathfrak{z}$ are purely imaginary, or equivalently $j\left( Z \right)$ for $Z\in\mathfrak{z}$ has only elliptic component. 
Thus, we have the followings: 
\begin{proposition}\label{ssm}
The equilibrium points of the Lie-Poisson equation \eqref{eq_geodesic_flow} for step-two nilpotent Lie groups associated to semi-simple modules are elliptic and hence Lyapunov stable. 
\hfill $\square$
\end{proposition}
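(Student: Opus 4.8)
The plan is to reduce the statement to the elementary fact that a skew\-symmetric endomorphism of a \emph{Euclidean} vector space has purely imaginary spectrum and is semisimple, and then to exploit the linearity of the reduced dynamics on a coadjoint orbit.

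First I would use condition (E\ref{cond_e1}): since the ambient form $\langle\cdot,\cdot\rangle$ on $\mathfrak{g}$ is a positive\-definite inner product and $\mathfrak{v}\perp\mathfrak{z}$, the restriction $\langle\cdot,\cdot\rangle_{\mathfrak{v}}$ is again positive\-definite, i.e. a genuine Euclidean inner product on $\mathfrak{v}$ --- this is precisely what distinguishes the present case from the general pseudo\-Riemannian one. Condition (E\ref{cond_e2}) then says exactly that every $j(Z)$, $Z\in\mathfrak{z}$, lies in $\mathfrak{so}(\mathfrak{v},\langle\cdot,\cdot\rangle_{\mathfrak{v}})$ for this \emph{definite} form. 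A skew\-symmetric operator on a Euclidean space is normal, hence diagonalizable over $\mathbb{C}$, and all its eigenvalues are purely imaginary; moreover $\mathfrak{v}=\ker j(Z)\,\dot{+}\,\mathrm{Im}\,j(Z)$ is then an orthogonal direct sum of $j(Z)$\-invariant subspaces.

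Next I would place the equilibrium $0+Z$ on a generic coadjoint orbit. By Proposition \ref{coadjoint orbit of step-two}, the orbit through $0+Z$ is the affine subspace $\mathcal{O}=Z+\mathrm{Im}\,j(Z)$, and $0\in\ker j(Z)$, so $0+Z$ is the (on a generic orbit, unique) equilibrium of \eqref{eq_geodesic_flow} restricted to $\mathcal{O}$. Because $\mathcal{O}$ is a linear submanifold and the centre component is frozen along it, the restricted Lie\-Poisson equation is the genuinely \emph{linear} Hamiltonian system \eqref{lie-poisson on orbits} with constant coefficient matrix $j(Z)|_{\mathcal{O}}$, i.e. the restriction of $j(Z)$ to $\mathrm{Im}\,j(Z)\cong T_{0+Z}\mathcal{O}$. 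By the orthogonal splitting above, $j(Z)|_{\mathcal{O}}$ is again skew\-symmetric for the induced (still positive\-definite) inner product, hence diagonalizable with purely imaginary, nonzero eigenvalues, which are exactly the nonzero eigenvalues of $j(Z)$. Therefore the linearization of the reduced system at $0+Z$ is a simple Hamiltonian matrix with $k_h=k_f=0$, so the Williamson type is $(k_e,0,0)$ with $k_e=\tfrac12\dim\mathcal{O}$: the equilibrium is elliptic.

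Finally, Lyapunov stability is immediate: the reduced flow on $\mathcal{O}$ \emph{is} the linear Hamiltonian system \eqref{lie-poisson on orbits}, so there are no higher\-order terms to control and the Proposition on linear Hamiltonian systems with $k_h=k_f=0$ and simple linearization matrix applies verbatim. I do not expect a genuine obstacle. The only points deserving a word of care are: (i) the passage from $j(Z)$ to $j(Z)|_{\mathcal{O}}$, which uses that $\ker j(Z)$ and $\mathrm{Im}\,j(Z)$ are orthogonal and $j(Z)$\-invariant --- automatic from skew\-symmetry with respect to a \emph{non-degenerate} (here definite) form; and (ii) that it is precisely conditions (E\ref{cond_e1})--(E\ref{cond_e2}) that make the form on $\mathfrak{v}$ definite, while condition (E\ref{cond_e3}) is not needed for this particular statement.
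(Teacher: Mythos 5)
Your argument is correct and follows essentially the same route as the paper: conditions (E1)--(E2) make $\langle\cdot,\cdot\rangle_{\mathfrak{v}}$ a genuine Euclidean inner product with respect to which $j(Z)$ is skew-symmetric, hence diagonalizable with purely imaginary spectrum, and the restriction to the (linear) coadjoint orbit then yields an elliptic, Lyapunov stable equilibrium via the linear-Hamiltonian stability criterion. Your write-up merely makes explicit the orbit restriction and the $\ker/\mathrm{Im}$ splitting that the paper leaves implicit (having established them earlier in the section), and your observation that (E3) is not needed is consistent with the paper's proof.
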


\section{Concluding Remarks}
By the general framework of Lie-Poisson Reduction, the Lie-Poisson equation \eqref{eq_geodesic_flow} is obtained for the left-invariant pseudo-Riemannian geodesic flow of a step-two Lie group in \S \ref{subsec_lp}. 
As is known, the Lie-Poisson equation \eqref{eq_geodesic_flow} can be restricted to a coadjoint orbit and the restricted Hamilton equation on a generic orbit $\mathcal{O}$ is given as a linear differential equation \eqref{lie-poisson on orbits}. 
The reduced equation has only one equilibrium point $Y_{\mathfrak{v}}+Y_{\mathfrak{z}}\in \mathcal{O}$, where $Y_{\mathfrak{v}}\in \ker\left(j\left(Y_{\mathfrak{z}}\right)\right)$. 
In \S \ref{subsec_wt_gen}, the stability of the equilibrium is charactarized based on the eigenvalues of the operator $j\left( Y_{\mathfrak{z}} \right)\in\mathfrak{so}\left( \mathfrak{v},\langle\cdot,\cdot\rangle_{\mathfrak{v}} \right)$ for the general step-two nilpotent Lie groups. 
The precise Williamson types of equilibrium points for several concrete classes of step-two nilpotent Lie groups in \S \ref{Certain classes of step-two nilpotent Lie groups} are described in \S \ref{subsec_wt_conc}. 
The main results of the present paper are summarized as follows:
\begin{itemize}
\item For general step-two nilpotent Lie groups, the Williamson type of the equilibrium is determined in Theorem \ref{main_thm_general} on the basis of the classification results on the conjugacy classes of Cartan subalgebras of semi-simple Lie algebras of types $\mathsf{B}$ and $\mathsf{D}$ discussed in \cite{sugiura_1959}.
\item In the case of a step-two Carnot group, the Williamson type of the equilibrium is found in Corollary \ref{cor_carnot}. 
In particular, it is shown that the equilibrium for M\'etivier groups is Lyapunov stable.
\item Theorem \ref{stability_pseudo-h-type} (respectively Corollary \ref{cor_ht}) describes the stability of the equilibrium for a pseudo-$H$-type (respectively $H$-type) Lie group.
\item For the Heisenberg-Reiter Lie groups, the Williamson type of the equilibrium is given in Theorem \ref{hei_rei} on the basis of the singular value decomposition with respect to an indefinite scalar product discussed in \cite{hassi_1991}.
\item In the case of step-two nilpotent Lie groups associated to semi-simple modules, the equilibrium is shown to be Lyapunov stable in Proposition \ref{ssm}. 
\end{itemize}

The present paper is concentrated only on step-two nilpotent Lie groups and the main idea is to discuss the ``$j$-mapping.'' 
It is interesting to investigate the stability of equilibrium points for more general nilpotent Lie groups of higher steps.
Such a problem may be studied on the basis of other techniques. 

\bigskip

\noindent \textbf{Acknowledgements:} 
The authers thank the referees for the helpful comments to improve the quality of the paper.
The second author would like to thank Tudor S. Ratiu and Hiroshi Tamaru for their valuable discussions particularly in relation to the classification of Cartan subalgebras. 
The first author is partially supported by JST SPRING Grant Number JPMJSP2101. 
The second author is partially supported by JSPS KAKENHI Grant Numbers 23K22409, 23H04481, 24K06749, 25H01492.
This work was partially supported by the Research Institute for Mathematical Sciences, an International Joint Usage/Research Center located in Kyoto University.


\bibliographystyle{plain}
\bibliography{references_corrected}


\appendix
\section{Appendix: Classification for the conjugacy classes of Cartan subalgebras in non-compact real forms of complex simple Lie algebras of types $\mathsf{B}_n$ and $\mathsf{D}_n$.}\label{appendix_a} 
In \cite{kostant_1955,kostant_2009}, a classification of conjugacy classes of Cartan subalgebras in non-compact simple Lie algebras is given by Kostant. 
Later by Sugiura in \cite{sugiura_1959}, the classification problem was dealt with independently. 
More precisely, the former classification is about the conjugacy classes up to full automorphism group of the Lie algebras, whereas the latter is about those up to inner automorphisms. 
Here, we describe the classification results in the case of simple Lie algebras of types $\mathsf{B}_n$ and $\mathsf{D}_n$ along the line of \cite{sugiura_1959}.

We first summarize the general method of the classification which amounts to the notion of admissible root systems. 
Let $\mathfrak{g}$ be a real semi-simple Lie algebra equipped with the Killing form $B$ and $\mathfrak{h}_0\subset \mathfrak{g}$ a fixed Cartan subalgebra. 
For a subalgebra $\mathfrak{k}\subset \mathfrak{g}$, a vector subspace $\mathfrak{p}\subset \mathfrak{g}$, and an Abelian  subalgebra $\mathfrak{m}\subset \mathfrak{g}$, the triple $\left(\mathfrak{k}, \mathfrak{p}, \mathfrak{m}\right)$ is called a standard triple if $\mathfrak{g}=\mathfrak{k}\dot{+}\mathfrak{p}$ is the Cartan decomposition of $\mathfrak{g}$ and $\mathfrak{m}$ is a maximal Abelian subalgebra contained in $\mathfrak{p}$. 
A Cartan subalgebra $\mathfrak{h}\subset \mathfrak{g}$ is called standard with respect to the standard triple $\left(\mathfrak{k}, \mathfrak{p}, \mathfrak{m}\right)$ if its vector part $\mathfrak{h}^-$ is in $\mathfrak{m}$ and the toroidal part $\mathfrak{h}^+$ is in $\mathfrak{k}$, where we write 
\begin{align*}
\mathfrak{h}^+:&=\left\{X\in\mathfrak{h}\mid \text{eigenvalues of}\, \mathrm{ad}_X\, \text{are purely imaginary}\right\}, \\
\mathfrak{h}^-:&=\left\{X\in\mathfrak{h}\mid \text{eigenvalues of}\, \mathrm{ad}_X\, \text{are real}\right\}. 
\end{align*}
Now, for a vector subspace $\mathfrak{n}\subset \mathfrak{m}$, we set $\mathfrak{l}:=\mathfrak{n}^{\perp}\cap \mathfrak{m}=\left\{X\in \mathfrak{m}\mid B\left(X, \mathfrak{n}\right)=0\right\}$. 
Then, a set $\bm{F}=\left\{\alpha_1, \ldots, \alpha_{\ell}\right\}$ of roots is called an \textit{admissible root system} if $\alpha_i\pm\alpha_j$ are not roots for any $i, j=1, \ldots, \ell$, $\alpha_i\pm\alpha_j\neq 0$ if $i\neq j$, and $\mathfrak{l}=\displaystyle \sum_{i=1}^{\ell}\mathbb{R}H_{\alpha_i}$, where for any root $\lambda$, $H_{\lambda}\in\mathfrak{h}$ is defined through $\lambda\left(H\right)=B\left(H_{\lambda}, H\right)$, $\forall H\in\mathfrak{h}$. 
We quote the following theorem by Sugiura \cite[Theorem 5, pp.394-395]{sugiura_1959}. 
\begin{theorem}[\cite{sugiura_1959}]
There exists a standard Cartan subalgebra such that $\mathfrak{h}^-=\mathfrak{n}$, if and only if there exists an admissible root system $\bm{F}=\left\{\alpha_1, \ldots, \alpha_{\ell}\right\}$ such that $\mathfrak{l}=\displaystyle \sum_{i=1}^{\ell}\mathbb{R}H_{\alpha_i}$. 
\end{theorem}
Due to this theorem, the classification of the conjugacy classes of the Cartan subalgera of a given semi-simple Lie algebra amounts to the classification for conjugacy classes of the admissible root systems. 
(See \cite[p. 395, Theorem 6]{sugiura_1959}.) 
Here, two admissible root systems $\bf{F}_1$ and $\bf{F}_2$ are called equivalent if 
\[
\mathrm{span}_{\mathbb{R}}\left\{H_{\alpha}\mid \alpha \in \bf{F}_1\right\}
=
\mathrm{span}_{\mathbb{R}}\left\{H_{\beta}\mid \beta \in \bf{F}_2\right\}. 
\]
In this case, we write $\bf{F}_1\equiv \bf{F}_2$. 
Further, two admissible root systems are called conjugate if there exists an element $\sigma$ of the Weyl group for $\mathfrak{g}\otimes \mathbb{C}$ with respect to the Cartan subalgebra $\mathfrak{h}_0\otimes \mathbb{C}$ such that $\sigma\bf{F}_1=\bf{F}_2$. 

In \cite{sugiura_1959}, this classification is done for each of non-compact real simple Lie algebras. 
Here, we consider the case where
\begin{align*}
\mathfrak{g}
&=
\mathfrak{o}\left(m,m+p\right)
=
\left\{X\in\mathbb{R}^{s\times s}\mid X^{\mathrm{T}}B_m+B_mX=0\right\} \\
&=
\left\{\left.\begin{pmatrix}
A & B & D \\
C & -A^{\mathrm{T}} & -D \\
F^{\mathrm{T}} & D^{\mathrm{T}} & L
\end{pmatrix}\right|A\in\mathbb{R}^{m\times m}, B, C\in\mathfrak{o}(m), D,F\in\mathbb{R}^{m\times p}, L\in\mathfrak{o}(p)\right\},
\end{align*}
where $B_m=\begin{pmatrix}
0 & \mathsf{E}_m & 0 \\
\mathsf{E}_m & 0 & 0 \\
0 & 0 & -\mathsf{E}_p 
\end{pmatrix}$, $s=2m+p$, $\mathsf{E}_m$, $\mathsf{E}_p$ are respectively the $m\times m$ and the $p\times p$ unit matrices, and $\mathfrak{o}(m)$, $\mathfrak{o}(p)$ are respectively the set of all $m\times m$ skew-symmetric matrices and the one of all $p\times p$ skew-symmetric matrices. 
The rank $n$ of the Lie algebra $\mathfrak{g}\otimes \mathbb{C}$ is given by the formula: 
\[
s
=
\begin{cases}
2n+1 &\text{if}\,\mathfrak{g}\otimes \mathbb{C}\,\text{is of type}\, \mathsf{B}_n, \\
2n &\text{if}\,\mathfrak{g}\otimes \mathbb{C}\,\text{is of type}\, \mathsf{D}_n. 
\end{cases}
\]

In this case, the Cartan decomposition of $\mathfrak{g}=\mathfrak{o}\left(m,m+p\right)=\mathfrak{k}\dot{+}\mathfrak{p}$ is given by 
\begin{align*}
\mathfrak{k}
&=
\left\{\left.\begin{pmatrix}
A & B & D \\
B & A & -D \\
-D^{\mathrm{T}} & D^{\mathrm{T}} & L
\end{pmatrix}\right|A, B\in\mathfrak{o}(m), D\in\mathbb{R}^{m\times p}, L\in\mathfrak{o}(p)\right\}, \\
\mathfrak{p}
&=
\left\{\left.\begin{pmatrix}
A & B & D \\
-B & -A & D \\
D^{\mathrm{T}} & D^{\mathrm{T}} & 0
\end{pmatrix}\right|A\in\mathrm{Sym}\left(m, \mathbb{R}\right), B\in\mathfrak{o}(m), D\in\mathbb{R}^{m\times p}\right\}, 
\end{align*}
where $\mathrm{Sym}\left(m, \mathbb{R}\right)$ stands for the set of all $m\times m$ symmetric matrices. 
A maximal Abelian subalgebra $\mathfrak{m}$ in $\mathfrak{p}$ is given as 
\[
\mathfrak{m}
=
\left\{\mathrm{diag}\left(h_1, \ldots, h_m, -h_1, \ldots, -h_m, 0, \ldots, 0\right)\mid h_1, \ldots, h_m\in\mathbb{R}\right\}. 
\]
We take the standard Cartan subalgebra 
\begin{align*}
&\mathfrak{h}_0=
\Biggr\{\left.
H=
\mathrm{diag}\left(h_1, \ldots, h_m, -h_1, \ldots, -h_m\right)
\oplus
\begin{pmatrix}
0 & \mathrm{diag}(-u_1, \ldots, -u_t) \\
\mathrm{diag}(u_1, \ldots, u_t) & 0
\end{pmatrix}\right| \\
&\qquad\qquad\qquad\qquad\qquad\qquad\qquad\qquad\qquad\qquad\qquad\qquad\qquad\qquad h_1, \ldots, h_m, u_1, \ldots, u_t\in\mathbb{R}\Biggr\}
\end{align*}
if $\mathfrak{g}$ is of type $\mathsf{D}_n$
and 
\begin{align*}
&\mathfrak{h}_0=
\Biggr\{\left.
H=
\mathrm{diag}\left(h_1, \ldots, h_m, -h_1, \ldots, -h_m\right)
\oplus
\begin{pmatrix}
0 & \mathrm{diag}(-u_1, \ldots, -u_t) \\
\mathrm{diag}(u_1, \ldots, u_t) & 0
\end{pmatrix}
\oplus 0\right| \\
&\qquad\qquad\qquad\qquad\qquad\qquad\qquad\qquad\qquad\qquad\qquad\qquad\qquad\qquad\qquad h_1, \ldots, h_m, u_1, \ldots, u_t\in\mathbb{R}\Biggr\}
\end{align*}
if $\mathfrak{g}$ is of type $\mathsf{B}_n$, where $t=\left\lfloor\dfrac{s-2m}{2}\right\rfloor$. 
(The definition of $t$ in \cite[p. 403]{sugiura_1959} should be read as this formula. )

We next take linear functions $e_i\in\left(\mathfrak{h}_0\otimes \mathbb{C}\right)^{\ast}$ defined through 
\[
e_i\left(H\right)
=
\begin{cases}
h_i, & i=1, \ldots, m, \\
\sqrt{-1}u_{i-m}, & i=m+1, \ldots, n,
\end{cases}
\]
where $H\in\mathfrak{h}_0$. 
The root system for $\mathfrak{g}\otimes \mathbb{C}$ is then given by 
\[
\Delta
=
\begin{cases}
\left\{\pm\left(e_i\pm e_j\right)\mid 1\leq i<j\leq n\right\}\cup\left\{\pm e_i\mid i=1, \ldots, n\right\}, &\text{if}\, \mathfrak{g}\, \text{is of type}\, \mathsf{B}_n, \\
\left\{\pm\left(e_i\pm e_j\right)\mid 1\leq i<j\leq n\right\}, &\text{if}\, \mathfrak{g}\, \text{is of type}\, \mathsf{D}_n. 
\end{cases}
\]
We below describe the conjugacy classes of Cartan subalgebras separately in the case of types $\mathsf{B}_n$ and $\mathsf{D}_n$. 

\paragraph{Conjugacy classes of Cartan subalgebras for $\mathfrak{g}$ of type $\mathsf{D}_n$.} 

The admissible roots are classified as one of the sets 
\[
\bm{F}\left(\ell, k\right)
=
\left\{e_1+e_2, e_1-e_2, \ldots, e_{2\ell-1}+e_{2\ell}, e_{2\ell-1}-e_{2\ell}, e_{2\ell+1}-e_{2\ell+k+1}, \ldots, e_{2\ell+k}-e_{2\ell+2k}\right\}, 
\]
where $0\leq k$, $0\leq \ell$, $2\left(k+\ell\right)\leq m$.
If $n=m$ and $m$ is even, we have to add one another set $\bm{F}\left(0,(m-2)/2\right)\cup\left\{e_{m-1}+e_m\right\}$. 
In this case, we have $p=0$. 
Note that \cite[(76), p.403]{sugiura_1959} should be read as $\bm{F}\left(0,(m-2)/2\right)\cup\left\{e_{m-1}+e_m\right\}$ to have a correct size of set of roots. 

For the admissible root system $\bm{F}\left(\ell, k\right)$, the corresponding conjugacy class of Cartan subalgebras is represented by the set $\mathfrak{h}$ of all the following matrices: 
\begin{equation}\label{Cartan_D_n}
\begin{pmatrix}
D_1\oplus \left(D_3+D_4\right)\oplus D_5 & D_2\oplus 0\oplus 0 \\
D_2\oplus 0\oplus 0 & D_1\oplus \left(-D_3+D_4\right)\oplus \left(-D_5\right)
\end{pmatrix}
\oplus D_6,
\end{equation}
where 
\begin{align}\label{matrices_D_n}
D_1
&=
\begin{pmatrix}
0 & \mathrm{diag}\left(-h_1, \ldots, -h_{\ell}\right) \\
\mathrm{diag}\left(h_1, \ldots, h_{\ell}\right) & 0
\end{pmatrix}, \notag \\
D_2
&=
\begin{pmatrix}
0 & \mathrm{diag}\left(-h_{\ell +1}, \ldots, -h_{2\ell}\right) \\
\mathrm{diag}\left(h_{\ell +1}, \ldots, h_{2\ell}\right) & 0
\end{pmatrix}, \notag \\
D_3
&=
\mathrm{diag}\left(h_{2\ell +1}, \ldots, h_{2\ell +k}, h_{2\ell +1}, \ldots, h_{2\ell +k}\right), \notag \\
D_4
&=
\begin{pmatrix}
0 & \mathrm{diag}\left(-h_{2\ell +k+1}, \ldots, -h_{2\ell+2k}\right) \\
\mathrm{diag}\left(h_{2\ell +k+1}, \ldots, h_{2\ell+2k}\right) & 0
\end{pmatrix}, \notag \\
D_5
&=
\mathrm{diag}\left(h_{2\ell+2k+1}, \ldots, h_m\right), \notag \\
D_6
&=
\begin{pmatrix}
0 & \mathrm{diag}\left(-h_{m +1}, \ldots, -h_{n}\right) \\
\mathrm{diag}\left(h_{m +1}, \ldots, h_{n}\right) & 0
\end{pmatrix}. 
\end{align}
For the admissible root system $\bm{F}\left(0,\dfrac{m-2}{2}\right)\cup\left\{e_{m-1}+e_m\right\}$, which appears if $m=n$ and $m$ is even, the corresponding conjugacy class of Cartan subalgebras is represented by the set $\mathfrak{h}$ of all the following matrices: 
\begin{equation}\label{Cartan_D_n_bis}
\begin{pmatrix}
D_3+D_4 & 0 & 0 & 0 \\
0 & D_5^{\prime} & 0 & D_7 \\
0 & 0 & -D_3+D_4 & 0 \\
0 & D_7 & 0 & D_5^{\prime}
\end{pmatrix},
\end{equation}
where 
\begin{align}\label{matrices_D_n_bis}
D_3
&=
\mathrm{diag}\left(h_{1}, \ldots, h_{(m-2)/2}, h_{1}, \ldots, h_{(m-2)/2}\right), \notag \\
D_4
&=
\begin{pmatrix}
0 & \mathrm{diag}\left(-h_{m/2}, \ldots, -h_{m-2}\right) \\
\mathrm{diag}\left(h_{m/2}, \ldots, h_{m-2}\right) & 0
\end{pmatrix}, \notag \\
D_5^{\prime}
&=
\mathrm{diag}\left(h_{m-1}, -h_{m-1}\right), \notag \\
D_7
&=
\begin{pmatrix}
0 & -h_m \\
h_m & 0
\end{pmatrix}, 
\end{align}
and we have $\ell=0$ and $k=\dfrac{m-2}{2}$.

Note that the matrices in \eqref{Cartan_D_n} and in \eqref{matrices_D_n} are slightly different from the ones in \cite[(77), (78)]{sugiura_1959}. 
In order to ensure that $\mathfrak{l}=\mathfrak{n}^{\perp}\cap\mathfrak{m}=\left(\mathfrak{h}^-\right)^{\perp}\cap\mathfrak{m}$ consists of $0\oplus D_3\oplus D_5\oplus 0 \oplus \left(-D_3\right)\oplus \left(-D_5\right)\oplus 0$ and that $\mathfrak{h}$ be maximally Abelian in $\mathfrak{g}$, however, \cite[(77), (78)]{sugiura_1959} should be read as in \eqref{Cartan_D_n} and \eqref{matrices_D_n}, particularly about the components of $D_1$ in \eqref{matrices_D_n} and the position of $D_4$ in \eqref{Cartan_D_n}. 
Similarly, the representative for the conjugacy class given by the admissible root system $\bm{F}\left(0,\dfrac{m-2}{2}\right)\cup\left\{e_{m-1}+e_m\right\}$, which is explained in \cite[Last paragraph of p.404]{sugiura_1959}, should be read as in \eqref{Cartan_D_n_bis} and \eqref{matrices_D_n_bis}. 

\paragraph{Conjugacy classes of Cartan subalgebras for $\mathfrak{g}$ of type $\mathsf{B}_n$.}
The admissible root systems in this case are listed \cite[p.405]{sugiura_1959} as 
\begin{align*}
&\bm{F}\left(\ell, k\right) \\
&=
\left\{e_1+e_2, e_1-e_2, \ldots, e_{2\ell-1}+e_{2\ell}, e_{2\ell-1}-e_{2\ell},e_{2\ell +1}-e_{2\ell +k+1}, \ldots, e_{2\ell+k}-e_{2\ell+2k}\right\}, \\
&\qquad\qquad \qquad 0\leq k, 0\leq \ell, 2\left(k+\ell\right)\leq m; \\
&\bm{F}^{\prime}\left(\ell, k\right) \\
&=
\left\{e_1+e_2, e_1-e_2, \ldots, e_{2\ell-1}+e_{2\ell}, e_{2\ell-1}-e_{2\ell},e_{2\ell +1}-e_{2\ell +k+1}, \ldots, e_{2\ell+k}-e_{2\ell+2k}, e_{2\ell+2k+1}\right\}, \\
&\qquad\qquad \qquad 0\leq k, 0\leq \ell, 2\left(k+\ell\right)+1\leq m. 
\end{align*}
The corresponding conjugacy classes for $\bm{F}\left(\ell, k\right)$ are as in \eqref{matrices_D_n} and those for $\bm{F}^{\prime}\left(\ell, k\right)$ are represented by the sets of following matrices: 
\begin{equation}\label{Cartan_B_n}
\begin{pmatrix}
D_1 & 0 & 0 & D_2 & 0 & 0 & 0 & 0 \\
0 & D_3+D_4 & 0 & 0 & 0 & 0 & 0 & 0 \\
0 & 0 & D_5^{\prime\prime} & 0 & 0 & 0 & 0 & - D_8 \\
D_2 & 0 & 0 & D_1 & 0 & 0 & 0 & 0 \\
0 & 0 & 0 & 0 & -D_3+D_4  & 0 & 0 & 0 \\
0 & 0 & 0 & 0 & 0 & -D_5^{\prime\prime} & 0 & D_8 \\
0 & 0 & 0 & 0 & 0 & 0 & D_6 & 0 \\
0 & 0 & D_8^{\mathrm{T}} & 0 & 0 & -D_8^{\mathrm{T}} & 0 & 0
\end{pmatrix},
\end{equation}
where $D_1, D_2, D_3, D_4, D_6$ are the same matrices as in \eqref{matrices_D_n} with zero vectors being added to the last row and the last column, $D_5^{\prime\prime}=\mathrm{diag}\left(0, h_{2\left(\ell +k+1\right)}, \ldots, h_m\right)$, and $D_8=\left(h_{2\ell +2k+1}, 0, \ldots, 0\right)^{\mathrm{T}}\in \mathbb{R}^{m-2\ell-2k}$. 
Note that the matrix (77) in \cite[p.404]{sugiura_1959} should be read as in \eqref{Cartan_B_n}, particularly about the positions of $D_4$ and $D_8$. 
\end{document}